\newtheorem{thm}{Theorem}[section]
\newtheorem{cor}[thm]{Corollary}
\newtheorem{lem}[thm]{Lemma}
\theoremstyle{definition}
\newcommand{\scr}[1]{\mathscr #1}
\definecolor{wco}{rgb}{0.5,0.2,0.3}
\numberwithin{equation}{section} \theoremstyle{remark}
\newtheorem{rem}{Remark}[section]
\def\R{\mathbb R}  \def\ff{\frac} \def\ss{\sqrt} 
\def\dd{\delta}  \def\vv{\varepsilon} \def\rr{\rho}
\def\<{\langle} \def\>{\rangle} \def\GG{\Gamma} 
  \def\nn{\nabla}  
\def\d{\text{\rm{d}}} \def\bb{\beta} \def\aa{\alpha} 
  \def\si{\sigma} 
\def\beg{\begin} \def\beq{\begin{equation}}  \def\F{\scr F}
\def\e{\text{\rm{e}}}  \def\OO{\Omega}  
 \def\tt{\tilde} 
 \def\P{\mathbb P} 
\def\C{\scr C}
\def\E{\mathbb E} 
  \def\LL{\Lambda}
\def\to{\rightarrow}\def\ll{\lambda}
\def\8{\infty}  \def\lf{\lfloor}
\def\rf{\rfloor}\def\3{\triangle}\def\1{\lesssim}
\renewcommand{\bar}{\overline}
\renewcommand{\tilde}{\widetilde}
\newcommand{\barray}{\begin{array}{ll}}
\newcommand{\earray}{\end{array}}
\newcommand{\bea}{\begin{displaymath}\begin{array}{rl}}
\newcommand{\eea}{\end{array}\end{displaymath}}
\title{A Strong Limit Theorem for Two-Time-Scale Fucntional Stochastic Differential Equations}
\author{Jianhai Bao,\thanks{Department of Mathematics, Central South University,
Changsha, Hunan, 410083, P.R.China, jianhaibao13@gmail.com} \and
Qingshuo Song,\thanks{Department of Mathematics, City University of
Hong Kong, Hong Kong, qingsong@cityu.edu.hk} \and George
Yin,\thanks{Department of Mathematics, Wayne State University,
Detroit, MI 48202, USA, gyin@math.wayne.edu} \and  Chenggui
Yuan\thanks{Department of Mathematics, Swansea University, Singleton
Park, SA2 8PP, UK, C.Yuan@swansea.ac.uk}}
\begin{document}

\maketitle

\begin{abstract}
This paper focuses on a class of two-time-scale functional
stochastic differential equations, where the phase space of the
segment processes is infinite-dimensional.
It develops ergodicity of the fast component and
obtains  a strong limit theorem for the averaging principle in the
spirit of
 Khasminskii's averaging approach for the slow
component.

\vskip 0.2 true in \noindent {\bf Keywords:} Two time scale,
functional differential equation,   exponential ergodicity,
invariant measure, averaging principle

\vskip 0.2 true in \noindent
 {\bf AMS Subject Classification:}\    60H15, 60J25, 60H30, 39B82

 \end{abstract}

\newpage

\setlength{\baselineskip}{0.26in}

\section{Introduction}
Having a wide range of
applications in 
science and engineering
(e.g., van Kampen \cite{v85}), singularly perturbed systems,
have been  investigated extensively recently; see, for instance,
Freidlin-Wentzell \cite{FW}, and Yin-Zhang \cite{YZ}. Singularly
perturbed systems usually exhibit multi-scale behavior owing to
inherent rates of changes of the systems or
 different rates of
interactions of subsystems and components.
To reflect
the slow and fast motions
in the underlying systems,
a time-scale
separation parameter $\vv\in(0,1)$ is often introduced. Due to the
multi-scale property, it is
frequently difficult to deal with
such systems using a direct approach.
As a result, it
is foremost important to reduce their complexity. The averaging
principle 
pioneered  by Khasminskii
\cite{K68} for a class of
diffusions
provides
an effective way to reduce the complexity of the systems. 
For systems in which both fast and slow components co-exist,
the idea of the averaging approach 
reveals that there is a limit dynamic system given
by the average of the slow component with respect to the invariant
measure of the fast component that is an ergodic process.
The  averaging equation approximates
the slow component in a suitable sense whenever $\vv\downarrow0$
leading to a substantial reduction of computational complexity.
The work \cite{K68} by Khasminskii inspired much of the subsequent
development. To date,  there have been a
vast
literature on the study of  for multi-scale stochastic dynamic
systems (see, e.g., the monograph \cite{KP}). For strong/weak
convergence in averaging principle, we refer to, e.g., Givon et al.
\cite{G07}, Liu \cite{L10}, Liu-Yin \cite{LY}, and Yin-Zhang
\cite{YZ} for
  stochastic differential equations (SDEs), and
Bl\"omker et al. \cite{BHP}, Br\'{e}hier \cite{B12},
 Cerrai \cite{C09},
 Fu et al. \cite{FWL}, and Kuksin-Piatnitski \cite{K084}
for stochastic partial differential equations (SPDEs); With
regarding to numerical methods, we refer to, e.g., E et al.
\cite{ELV} and Givon et al. \cite{G06}; As for related control and
filtering problems, we mention, e.g.,
 Kushner
\cite{K10,Kushner90}. Concerning large deviations, we refer to,
e.g., Kushner \cite{K10}, and Veretennikov \cite{V00}.


The aforemention references are all concerned with systems without ``memory''.
Nevertheless,
more often than not, dynamic systems with delay are
un-avoidable
in a wide variety of applications in science and engineering, where the
dynamics are subject to propagation of delays. In response to the
great needs, there is also an extensive literature on functional
SDEs; see, e.g., the monographs \cite{M08,M84}.


In contrast to the rapid progress in two-time-scale systems
and  differential delay equations,
the study on averaging principles for functional SDEs  is still in
its infancy.
Compared with the existing literature, for such systems, one of the
outstanding
issues
is  the phase space of the segment
processes is infinite-dimensional, which makes
the goal of obtaining a strong limit theorem for the averaging
principle a very difficult task. This work aims to take the
challenges and to 
establish a strong limit theorem for the averaging
principles for a range of two-time-scale functional SDEs.

The rest of the   paper is organized as follows. Section
\ref{sec:fra} presents the setup of the problem we wish to study.
The ergodicity
of
the frozen equation with memory is obtained in Section
\ref{sec:erg}. Section \ref{sec:lem} constructs some auxiliary
two-time-scale stochastic systems with memory and provides a number
of preliminary lemmas. Section \ref{sec:avg} derives a strong limit
theorem for the averaging principle in the spirit of
 Khasminskii's approach
for the slow component.

Before proceeding further, a word of notation is in order.
Throughout the paper, generic constants will be denoted by $c$; we
use the shorthand notation $a\lesssim b$ to mean $a\le cb$,  we use $a\lesssim_T b$ to emphasize the constant $c$ depends on $T.$

\section{Formulation}\label{sec:fra}
For  integers $n,m\ge1,$ let $(\R^n,|\cdot|,\<\cdot,\cdot\>)$ be an
$n$-dimensional Euclidean space, and $\R^n\otimes\R^m$ denote the
collection of all $n\times m$ matrices with real entries. For an
$A\in\R^n\otimes\R^m$, $\|A\|$ stands for its Frobenius matrix norm.
For an interval 
$I\subset(-\8,\8)$,   $C(I;\R^n)$ means the family
of
all continuous functions from $I \mapsto \R^n$. 
For a fixed $\tau>0$, let $\C=C([-\tau,0];\R^n)$, endowed with the
uniform norm $\|\cdot\|_\8$. For  $h(\cdot)\in C([-\tau,\8);\R^n)$
and $t\ge0$, define the segment $h_t\in\C$ by
$h_t(\theta)=h(t+\theta)$, $\theta\in[-\tau,0]$.


Introducing a time-scale separation parameter $\vv\in(0,1)$, we
consider two-time-scale systems of functional 
stochastic differential equations
(SDEs) of the following form
\begin{equation}\label{eq1}
\d X^\vv(t)=b_1(X^\vv_t, Y^\vv_t)\d t +\si_1(X^\vv_t)\d W_1(t),~~~
t>0,~~~ X_0^\vv=\xi\in\C,
\end{equation}
and
\begin{equation}\label{eq2}
\begin{split}
\d Y^\vv(t)&=\ff{1}{\vv}b_2(X^\vv_t, Y^\vv(t),Y^\vv(t-\tau))\d
t+\ff{1}{\ss{\vv}}\si_2(X^\vv_t, Y^\vv(t),Y^\vv(t-\tau))\d
W_2(t),~t>0
\end{split}
\end{equation}
with the initial value $Y_0^\vv=\eta\in\C$, where
$b_1:\C\times\C\mapsto\R^n$,
$b_2:\C\times\R^n\times\R^n\mapsto\R^n$,
$\si_1:\C\mapsto\R^n\otimes\R^m$,
$\si_2:\C\times\R^n\times\R^n\mapsto\R^n\otimes\R^m$ are G\^ateaux
differentiable, $(W_1(t))_{t\ge0}$ and $(W_2(t))_{t\ge0}$ are two
mutually independent $m$-dimensional Brownian motions defined on a
 probability space $(\OO,\F,\P)$, equipped with $(\F_t)_{t\ge0}$, a 
 family  of filtrations  satisfying the usual
conditions (i.e., for each $t\ge0,$
$\F_t=\F_{t+}:=\bigcap_{s>t}\F_s$, and $\F_0$ contains all $\P$-null
sets). As usual, for two-time-scale systems   \eqref{eq1} and
\eqref{eq2},   $X^\vv(t)$ is called  the  slow component, while
$Y^\vv(t)$ is called the  fast component.


We denote by
$\nn^{(i)}$  the gradient operators for the $i$-th component.
Throughout the paper, for any $\chi,\phi\in\C$ and
$x,x^\prime,y,y^\prime\in\R^n$,  we assume that
\begin{enumerate}
\item[({\bf A1})]  $\nn b_1=( \nn^{(1)} b_1, \nn^{(2)} b_1)$
 is
 bounded,  and there
exists an $L>0$ such that
\begin{equation*}
|b_1(\chi, \phi)|\le L(1+\|\chi\|_\8) ~~\mbox{ and }~~
\|\si_1(\phi)-\si_1(\chi)\|\le L\|\phi-\chi\|_\8.
\end{equation*}

\item[({\bf A2})]    $\nn
b_2=(\nn^{(1)} b_2, \nn^{(2)} b_2, \nn^{(3)} b_2)$ and  $\nn\si_2=(\nn^{(1)} \si_2, \nn^{(2)} \si_2, \nn^{(3)} \si_2)$ are bounded.

\item[({\bf A3})]  There exist $\ll_1>\ll_2>0$, independent of $\chi,$ such that
\begin{equation*}
\begin{split}
&2\<x-x^\prime,b_2(\chi,x,y)-b_2(\chi,
x^\prime,y^\prime)\>+\|\si_2(\chi,x,y)-\si_2(\chi,x^\prime,y^\prime)\|^2\\
&\qquad\le-\ll_1|x-x^\prime|^2+\ll_2|y-y^\prime|^2.
\end{split}
\end{equation*}

\item[({\bf A4})] For the initial value $X_0^\vv=\xi\in\C$ of \eqref{eq1}, there exists a $\ll_3>0$ such that
$$ |\xi(t)-\xi(s)|\le \ll_3|t-s|,~~  s,t\in[-\tau,0]. $$
\end{enumerate}

Let 
us comment the assumptions ({\bf A1})-({\bf A4}) above. From
({\bf A1}) and ({\bf A2}), the gradient operators $\nn b_1$, $\nn
b_2$, and $\nn \si_2$ are bounded, respectively, so that $b_1$,
$b_2$, and $\si_2$ are Lipschitz. Then, both \eqref{eq1} and
\eqref{eq2} are well posed (see, e.g., \cite[Theorem 2.2,
P.150]{M08}). While, ({\bf A3}) is imposed to analyze the ergodic
property of the frozen equation  (see Theorem \ref{Ergodicity}
below),  guarantee the Lipschitz property of $\bar{b}_1$ (see
Corollary \ref{bounded} below), defined in \eqref{u1},   and provide
a uniform bound of the segment process $(Y_t^\vv)_{t\in[0,T]}$ (see
Lemma \ref{l5} below). Next, ({\bf A4}) ensures that the
displacement of the segment process $(X_t^\vv)_{t\in[0,T]}$ is
continuous in the mean $L^p$-norm sense (see Lemma \ref{L3} below).

\section{Ergodicity   of the  Frozen Equation with Memory}\label{sec:erg}
Consider an SDE with memory  associated with the fast motion while
with the frozen slow component in the form
\begin{equation}\label{eq4}
\d Y(t)=b_2(\zeta,Y(t), Y(t-\tau))\d t+\si_2(\zeta,Y(t),
Y(t-\tau))\d W_2(t),\ \ t>0,~~\ Y_0=\eta\in\C.
\end{equation}
Under ({\bf A2)}, \eqref{eq4} has a unique strong solution
$(Y(t))_{t\ge-\tau}$ (see, e.g., \cite[Theorem 2.2, P.150]{M08}). To
highlight  the initial value $\eta\in\C$ and the frozen segment
$\zeta\in\C$, we write the corresponding solution process
$(Y^\zeta(t,\eta))_{t\ge-\tau}$ and the segment process
$(Y^\zeta_t(\eta))_{t\ge0}$ instead of $(Y(t))_{t\ge-\tau}$ and
$(Y_t)_{t\ge0}$, respectively.

\smallskip

Our main result in this section is stated as below. It is concerned
with  ergodicity of the frozen SDE with memory.

\begin{thm}\label{Ergodicity}
{\rm Under ({\bf A2}) and ({\bf A3}),   $Y_t^\zeta(\eta)$ has a
unique invariant measure $\mu^\zeta$, and there exists  $\ll>0$ such
that
\begin{equation}\label{d6}
|\E b_1(\zeta,Y^\zeta_t(\eta))-\bar b_1(\zeta)|\1\e^{-\ll
t}(1+\|\eta\|_\8+\|\zeta\|_\8),\ \ \ t\ge0,~~\eta\in\C,
\end{equation}
  where
\begin{equation}\label{u1}
\bar{b}_1(\zeta):=\int_\C b_1(\zeta,\varphi)\mu^\zeta(\d\varphi),\ \
\ \zeta\in\C.
\end{equation}

 }
\end{thm}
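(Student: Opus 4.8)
The plan is to derive a pathwise exponential contraction for the frozen segment process from the dissipativity assumption~({\bf A3}), to construct $\mu^\zeta$ as the limit of the time-marginals of $Y^\zeta_t(\eta)$, and then to read off \eqref{d6} from a synchronous coupling together with the Lipschitz property of $b_1$ furnished by~({\bf A1}). First, fix $\zeta\in\C$ and $\eta,\eta'\in\C$ and let $Y^\zeta(\cdot,\eta)$, $Y^\zeta(\cdot,\eta')$ solve \eqref{eq4} driven by the \emph{same} Brownian motion $W_2$. Writing $Z(t):=Y^\zeta(t,\eta)-Y^\zeta(t,\eta')$, It\^o's formula for $|Z(t)|^2$, assumption~({\bf A3}) with $\chi=\zeta$, and taking expectations (the local-martingale term being a genuine martingale thanks to the uniform moment bound established below) give
\begin{equation*}
\ff{\d}{\d t}\E|Z(t)|^2\le-\ll_1\,\E|Z(t)|^2+\ll_2\,\E|Z(t-\tau)|^2,\qquad t>0,
\end{equation*}
with $\E|Z(t)|^2\le\|\eta-\eta'\|_\8^2$ on $[-\tau,0]$. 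Since $\ll_1>\ll_2$, a Halanay-type delay differential inequality yields $\E|Z(t)|^2\le c\,\e^{-\gg t}\|\eta-\eta'\|_\8^2$, where $\gg\in(0,\ll_1)$ is the unique root of $\ll_1-\gg=\ll_2\e^{\gg\tau}$. To pass from this pointwise estimate to the segment norm I would represent $Z$ on $[t-\tau,t+\theta]$ through \eqref{eq4}, invoke the global Lipschitz continuity of $b_2,\si_2$ (from~({\bf A2})) together with the Burkholder--Davis--Gundy inequality, and dominate the resulting time-integrals of $\E|Z(s)|^2$ and $\E|Z(s-\tau)|^2$ by the pointwise bound, arriving at
\begin{equation}\label{planc}
\E\|Y^\zeta_t(\eta)-Y^\zeta_t(\eta')\|_\8^2\le c\,\e^{-\gg t}\|\eta-\eta'\|_\8^2,\qquad t\ge0.
\end{equation}

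Next, an analogous It\^o-plus-Halanay computation---comparing $Y^\zeta(\cdot,\eta)$ with the null path, using~({\bf A3}) with $(x',y')=(0,0)$ and the at-most-linear growth of $|b_2(\zeta,0,0)|$ and $\|\si_2(\zeta,0,0)\|$ in $\|\zeta\|_\8$ (a consequence of~({\bf A2}))---gives the uniform-in-time moment bound $\sup_{t\ge0}\E\|Y^\zeta_t(\eta)\|_\8^2\1 1+\|\eta\|_\8^2+\|\zeta\|_\8^2$. Combining \eqref{planc} with the Markov and flow properties of the segment process, one checks that $\{\mathrm{Law}(Y^\zeta_t(\eta))\}_{t\ge0}$ is Cauchy in the quadratic Wasserstein distance $\mathcal W_2$ on $\C$---indeed $\mathcal W_2\big(\mathrm{Law}(Y^\zeta_{t+s}(\eta)),\mathrm{Law}(Y^\zeta_t(\eta))\big)^2\1\e^{-\gg t}\,\E\|Y^\zeta_s(\eta)-\eta\|_\8^2\1\e^{-\gg t}(1+\|\eta\|_\8^2+\|\zeta\|_\8^2)$ uniformly in $s\ge0$---so it converges to some probability measure $\mu^\zeta$ on $\C$; continuity of the transition semigroup makes $\mu^\zeta$ invariant, \eqref{planc} forces it to be the unique invariant measure, and (taking, say, $\eta$ the null path) the moment bound passes to the limit to give $\int_\C\|\varphi\|_\8^2\,\mu^\zeta(\d\varphi)\1 1+\|\zeta\|_\8^2$.

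Finally, to prove \eqref{d6}, pick an $\F_0$-measurable random variable $\varphi$ with law $\mu^\zeta$, independent of $W_2$; by invariance $\mathrm{Law}(Y^\zeta_t(\varphi))=\mu^\zeta$, so, recalling \eqref{u1}, $\bar b_1(\zeta)=\E b_1(\zeta,Y^\zeta_t(\varphi))$. Since $\nn^{(2)}b_1$ is bounded by~({\bf A1}), $b_1(\zeta,\cdot)$ is Lipschitz on $\C$, whence
\begin{equation*}
|\E b_1(\zeta,Y^\zeta_t(\eta))-\bar b_1(\zeta)|\1\E\|Y^\zeta_t(\eta)-Y^\zeta_t(\varphi)\|_\8\le\big(\E\|Y^\zeta_t(\eta)-Y^\zeta_t(\varphi)\|_\8^2\big)^{1/2}.
\end{equation*}
Conditioning on $\varphi$, applying \eqref{planc}, and then using the second moment of $\mu^\zeta$ bounds the right-hand side by $c\,\e^{-\gg t/2}(1+\|\eta\|_\8+\|\zeta\|_\8)$; taking $\ll=\gg/2$ proves \eqref{d6}.

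I expect the crux to be the second half of the first step: promoting the pointwise exponential decay of $\E|Z(t)|^2$ coming from the Halanay inequality to exponential decay of the \emph{segment} norm $\E\|Y^\zeta_t(\eta)-Y^\zeta_t(\eta')\|_\8^2$ without losing the rate $\gg$ and while retaining the linear dependence on $\|\eta\|_\8$ and $\|\zeta\|_\8$, because the delay couples the dynamics on $[t-\tau,t]$ back to earlier instants and this has to be carried through the Burkholder--Davis--Gundy estimate cleanly; the same uniform-in-$t$ (as opposed to merely locally bounded) control is the delicate point in the moment estimate as well.
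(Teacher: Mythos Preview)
Your proposal is correct and follows essentially the same architecture as the paper's proof: synchronous coupling plus ({\bf A3}) for the exponential contraction of the segment, a uniform second-moment bound, Cauchy-ness of the time-marginals in a Wasserstein metric to produce and identify $\mu^\zeta$, and then the Lipschitz property of $b_1(\zeta,\cdot)$ from ({\bf A1}) to deduce \eqref{d6}. The only differences are tactical: the paper handles the delay inequality by applying It\^o's formula directly to $\e^{\ll t}|Z(t)|^2$ with $\ll$ chosen so that $\ll_1-\ll-\ll_2\e^{\ll\tau}=0$ (which is precisely your Halanay root $\gg$), and it works with the Wasserstein distance induced by the \emph{bounded} metric $1\wedge\|\cdot\|_\8$ so that all of $\scr P(\C)$ is complete and $W_2$-convergence coincides with weak convergence, allowing a direct appeal to Krylov--Bogoliubov rather than your Feller-continuity argument on $\scr P_2(\C)$. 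The segment-norm upgrade you single out as the crux is exactly the step the paper spells out most carefully, via B-D-G on $[t-\tau,t]$ combined with the already-established pointwise decay.
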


\begin{proof}
The main idea of the proof concerning existence of an invariant
measure goes back to \cite[Lemma 2.4]{BWY}, which, nevertheless,
involves functional SDEs with additive noises.

\smallskip

Let $\scr P(\C)$ be the set of all probability measures on $\C$.
$W_2$ denotes the $L^2$-Wasserstein distance on $\scr P(\C)$ induced
by the bounded distance $\rr(\xi,\eta):= 1\land
\|\xi-\eta\|_\infty,$ i.e.,
$$W_2(\mu_1,\mu_2)= \inf_{\pi\in \C(\mu_1,\mu_2)} \big(\pi(\rr^2)\big)^{\ff 1 2},\ \ \mu_1,\mu_2\in \scr P(\C),$$
where $\C(\mu_1,\mu_2)$ is the set of all coupling  probability
measures with marginals  $\mu_1$ and $\mu_2$. It is well known that
$\scr P(\C)$ is a complete metric space w.r.t. the distance $W_2$
(
\cite[Lemma 5.3, P.174]{Chen} and \cite[Theorem 5.4,
P.175]{Chen}), and the convergence in $W_2$ is equivalent to the
weak convergence (
\cite[Theorem 5.6, P.179]{Chen}). Let
$P_t^{\zeta,\eta}$ be the law of the segment process
$Y_t^\zeta(\eta)$. According to the Krylov-Bogoliubov existence
theorem
(\cite[Theorem 3.1.1, P.21]{DZ}), if
$P_t^{\zeta,\eta}$ converges weakly to a probability measure
$\mu_{\eta}^\zeta$, then $\mu_{\eta}^\zeta$ is an invariant measure.
So,  in 
light of the previous discussion,     it suffices to
prove the assertions below:
 \beg{enumerate}
\item[(i)] $\{P_t^{\zeta,\eta}\}_{t\ge 0}$ is a
Cauchy sequence w.r.t. the distance $W_2$. If so, by  the
completeness of $\scr P(\C)$ w.r.t. the distance $W_2$, there is
$\mu_\eta^\zeta\in \scr P(\C)$ such that $\lim_{t\to\infty}
W_2(P_t^{\zeta,\eta},\mu_\eta^\zeta)=0$;
\item[(ii)] $W_2(\mu_\eta^\zeta,\mu_{\eta^\prime}^\zeta)=0$ for any
$\eta,\eta^\prime\in\C$ and  frozen $\zeta\in\C$, that is,
$\mu_\eta^\zeta$ is independent of $\eta$.
  \end{enumerate}
In the sequel, we shall claim that (i) and (ii) hold, respectively.
For any $t_2>t_1>\tau$ and the  frozen segment $\zeta\in\C$,
consider the following SDE with memory \beg{equation}\label{b10}\d
\bar Y(t)=b_2(\zeta,\bar Y(t),\bar Y(t-\tau))\d t+\si_2(\zeta,\bar
Y(t),\bar Y(t-\tau))\d W_2(t),\ t\in [t_2-t_1,t_2] \end{equation}
with the initial value $\bar Y_{t_2-t_1}=\eta.$ The solution process
and the segment process associated with \eqref{b10} are denoted by
$(\bar Y^\zeta(t,\eta))$ and $(Y^\zeta_t(\eta))$, respectively.
Observe that the laws of $Y^\zeta_{t_2}(\eta)$ and $\bar
Y^\zeta_{t_2}(\eta)$ are $P_{t_2}^{\zeta,\eta}$ and
$P_{t_1}^{\zeta,\eta}$, respectively.

\smallskip

By ({\bf A2}), there exists an $\aa>0$ such that
\begin{equation}\label{y1}
\|\si_2(\chi,x,y)-\si_2(\chi,x^\prime,y^\prime)\|\le
\aa(|x-x^\prime|+|y-y^\prime|),
\end{equation}
and
\begin{equation}\label{y2}
|b_2(\chi,0,0)|+\|\si_2(\chi,0,0)\|\le \aa(1+\|\chi\|_\8)
\end{equation}
for any $\chi\in\C$ and $x,x^\prime,y,y^\prime\in\R^n$. Accordingly,
\eqref{y1} and \eqref{y2}, together with ({\bf A3}),   yield that
there exist $\ll_1^\prime>\ll_2^\prime>0$, independent of $\chi,$
such that
\begin{equation}\label{b1}
\begin{split}
&2\<x,b_2(\chi,x,y)\>+\|\si_2(\chi,x,y)\|^2
\le-\ll_1^\prime|x|^2+\ll_2^\prime|y|^2+c(1+\|\chi|_\8^2)
\end{split}
\end{equation}
for any $\chi\in\C$ and $x,y\in\R^n$.   For a sufficiently small
$\ll^\prime>0$
 obeying
$\ll_1^\prime-\ll^\prime-\ll_2^\prime\e^{\ll^\prime\tau}=0$ due to
$\ll_1^\prime>\ll_2^\prime>0$, applying  It\^o's formula, we infer
from \eqref{b1} that
\begin{equation*}
\begin{split}
\e^{\ll^\prime
t}\E|Y^\zeta(t,\eta)|^2&\le|\eta(0)|^2+\int_0^t\e^{\ll^\prime
s}\E\{c(1+\|\zeta\|_\8^2)+\ll^\prime|Y^\zeta(s,\eta)|^2\\
&\quad-\ll_1^\prime|Y^\zeta(s,\eta)|^2+\ll_2^\prime|Y^\zeta(s-\tau,\eta)|^2\}\d
s\\
&\1\|\eta\|_\8^2+\e^{\ll^\prime t}(1+\|\zeta\|_\8^2),~~~t>0.
\end{split}
\end{equation*}
Consequently, we
arrive at
\begin{equation}\label{b2}
\E|Y^\zeta(t,\eta)|^2\1\e^{-\ll^\prime
t}\|\eta\|_\8^2+1+\|\zeta\|_\8^2,~~~t>0.
\end{equation}
Also, by the It\^o  formula, in addition to  the
Burkhold-Davis-Gundy (B-D-G for abbreviation) inequality, we derive
from ({\bf A3}), and  \eqref{y1}-\eqref{b2} that, for any
$t\ge\tau,$
\begin{equation}\label{b4}
\begin{split}
&\E\|Y^\zeta_t(\eta)\|_\8^2\\
&\11+\|\zeta\|_\8^2+\E|Y^\zeta(t-\tau,\eta)|^2+\int_{t-2\tau}^t\E|Y^\zeta(s,\eta)|^2\d
s\\
&\quad+2\E\Big(\sup_{t-\tau\le s\le
t}\Big|\int_{t-\tau}^s\<Y^\zeta(s,\eta),\si_2(\zeta,Y^\zeta(s,\eta),Y^\zeta(s-\tau,\eta))\d
W_2(s)\>\Big|\Big)\\
&\le\ff{1}{2}\E\|Y^\zeta_t(\eta)\|_\8^2+c\Big\{1+\|\zeta\|_\8^2+\E|Y^\zeta(t-\tau,\eta)|^2+\int_{t-2\tau}^t\E|Y^\zeta(s,\eta)|^2\d
s\Big\}.
\end{split}
\end{equation}
On the other hand, following 
the argument leading to 
\eqref{b4}, one
has
\begin{equation}\label{b0}
\begin{split}
\E\|Y^\zeta_t(\eta)\|_\8^2
&\le\ff{1}{2}\E\|Y^\zeta_t(\eta)\|_\8^2+c\Big\{1+\|\zeta\|_\8^2+\|\eta\|^2_\8+\int_0^t\E|Y^\zeta(s,\eta)|^2\d
s\Big\},~t\in[0,\tau].
\end{split}
\end{equation}
Thus, combining \eqref{b2} with \eqref{b4} and \eqref{b0} leads to
\begin{equation}\label{b5}
\E\|Y^\zeta_t(\eta)\|_\8^2\le c(\e^{-\ll^\prime
t}\|\eta\|_\8^2+1+\|\zeta\|_\8^2).
\end{equation}

\smallskip

In what follows, we assume  $t\in[t_2-t_1,t_2]$, and set
$\Gamma^\zeta(t,\eta):=Y^\zeta(t,\eta)-\bar Y^\zeta(t,\eta)$ for the
sake of notational simplicity.  Again, for a sufficiently small
$\ll>0$ such that $\ll_1-\ll-\ll_2\e^{\ll\tau}=0$ owing to
$\ll_1>\ll_2$, by the It\^o formula, it follows from ({\bf A2}) that
\begin{equation*}
\begin{split}
\e^{\ll
t}\E|\Gamma^\zeta(t,\eta)|^2
&\le\e^{\ll(t_2-t_1)}\E|\Gamma^\zeta(t_2-t_1,\eta)|^2\\
&\quad+\int_{t_2-t_1}^t\e^{\ll
s}\E\{(\ll-\ll_1)|\Gamma^\zeta(s,\eta)|^2+\ll_2|\Gamma^\zeta(s-\tau,\eta)|^2\}\d
s\\
&\le\e^{\ll(t_2-t_1)}\E|\Gamma^\zeta(t_2-t_1,\eta)|^2+\e^{\ll\tau}\int_{t_2-t_1-\tau}^{t_2-t_1}\e^{\ll
s}\E|\Gamma^\zeta(s,\eta)|^2\d s\\
&\1\e^{\ll
(t_2-t_1)}\|\eta\|^2_\8+\e^{\ll(t_2-t_1)}\E\|Y^\zeta_{t_2-t_1}(\eta)\|^2_\8.
\end{split}
\end{equation*}
This, together with \eqref{b5}, yields that
\begin{equation}\label{b6}
\E|\Gamma^\zeta(t,\eta)|^2\1\e^{-\ll
(t+t_1-t_2)}(1+\|\eta\|_\8^2+\|\zeta\|_\8^2).
\end{equation}
Imitating a similar procedure to derive \eqref{b4}, in particular,
we obtain from ({\bf A2}), \eqref{y1}, and \eqref{b6} that
\begin{equation}\label{b7}
\E\|\Gamma^\zeta_{t_2}(\eta)\|^2_\8\1\e^{-\ll
t_1}(1+\|\eta\|_\8^2+\|\zeta\|_\8^2).
\end{equation}
This further implies that
$$W_2(P_{t_1}^{\zeta,\eta},P_{t_2}^{\zeta,\eta})\le \E \{1\land \|Y^\zeta_{t_2}(\eta)-\bar Y^\zeta_{t_2}(\eta)\|_\infty^2\} \1\e^{-\ll
t_1}(1+\|\eta\|_\8^2+\|\zeta\|_\8^2),$$ which goes to zero as $t_1$
(hence $t_2$) tends to $\8$. Thus 
claim (i) holds.

\smallskip

By carrying out a similar argument to obtain \eqref{b7}, one finds
that
\begin{equation}\label{b8}
\E\|Y^\zeta_t(\eta)-Y^\zeta_t(\eta^\prime)\|_\8^2\1\e^{-\ll
t}\|\eta-\eta^\prime\|_\8^2.
\end{equation}
For fixed $\zeta\in\C$ and arbitrary $\eta,\eta\in\C$, observe that
\begin{equation}\label{b9}
W_2(\mu_\eta^\zeta,\mu_{\eta^\prime}^\zeta)\le
W_2(P_t^{\zeta,\eta},\mu_\eta^\zeta)+W_2(P_t^{\zeta,\eta^\prime},\mu_{\eta^\prime}^\zeta)+W_2(P_t^{\zeta,\eta},P_t^{\zeta,\eta^\prime}).
\end{equation}
Consequently, 
claim (ii) follows by taking \eqref{b8} and
\eqref{b9} into consideration.

\smallskip

By virtue of   \eqref{b5} and the invariance of $\mu^\zeta$,  it
then follows that
\begin{equation*}
\begin{split}
\int_\C\|\psi\|_\8^2\pi^\zeta(\d \psi)
&\le c\Big\{1+\|\zeta\|_\8^2+\e^{-\ll
t}\int_\C\|\psi\|_\8^2\pi^\zeta(\d \psi)\Big\}.
\end{split}
\end{equation*}
Thus, choosing $t>0$ sufficiently large such that $\dd:=c\e^{-\ll
t}<1$, one finds that
\begin{equation}\label{r3}
\int_\C\|\psi\|_\8^2\pi^\zeta(\d \psi)\11+\|\zeta\|_\8^2.
\end{equation}
 Next, with the aid of the invariance of $\pi^\zeta$, \eqref{b8},
and \eqref{r3},  we deduce from ({\bf A1}) that
\begin{equation*}
\begin{split}
|\E b_1(\zeta,Y^\zeta_t(\eta))-\bar b_1(\zeta)|
&\1\int_\C\E \|Y^\zeta_t(\eta)-Y^\zeta_t(\psi)\|_\8\pi^\zeta(\d\psi)
\1\e^{-\ff{\ll t}{2}}\int_\C
\|\eta-\psi\|_\8\pi^\zeta(\d\psi)\\
&\1\e^{-\ff{\ll t}{2}}(1+\|\eta\|_\8+\|\zeta\|_\8).
\end{split}
\end{equation*}
As a result,  \eqref{d6} follows.
\end{proof}

\begin{rem}
{\rm It should be noted that  
there are  
other alternative approaches to 
obtain
existence and uniqueness of invariant measures for functional SDEs.
Regarding to existence of invariant measures,  
Es-Sarhir et al. \cite{ESV}, and
Kinnally-Williams \cite{KW} by Arzel\`{a}--Ascoli's tightness
characterization, Bao et al. \cite{BYY14} using a remote start
method, Bao et al. \cite{BYY13} adopting Kurtz's Tightness
Criterion, and Rei$\beta$ et al. \cite{RR} by considering the
semi-martingale characteristics. As for uniqueness of invariant
measures, we refer to Hairer et al. \cite{HMS}, and
Kinnally-Williams \cite{KW}  by utilizing an asymptotic coupling
method. }
\end{rem}

The next corollary, which plays a crucial role in discussing strong
limit theorem for the averaging principle, states that $\bar b_1$,
defined by \eqref{u1}, enjoys a Lipschitz property.

\begin{cor}\label{bounded}
{\rm Under ({\bf A1})-({\bf A3}),  $\bar b_1:\C\mapsto\R^n$, defined
as in \eqref{u1}, is Lipschitz. }
\end{cor}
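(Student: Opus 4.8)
The plan is to show that $\bar b_1(\zeta) = \int_\C b_1(\zeta,\varphi)\,\mu^\zeta(\d\varphi)$ is Lipschitz in $\zeta$ by splitting the difference $|\bar b_1(\zeta)-\bar b_1(\zeta')|$ into two pieces: one controlling the variation of $b_1$ in its \emph{first} argument (which is easy, since $\nn^{(1)}b_1$ is bounded by ({\bf A1})), and one controlling the variation of the \emph{measure} $\mu^\zeta$ as $\zeta$ varies. Concretely, I would write
\begin{equation*}
\bar b_1(\zeta)-\bar b_1(\zeta')=\int_\C\big(b_1(\zeta,\varphi)-b_1(\zeta',\varphi)\big)\mu^\zeta(\d\varphi)+\Big(\int_\C b_1(\zeta',\varphi)\mu^\zeta(\d\varphi)-\int_\C b_1(\zeta',\varphi)\mu^{\zeta'}(\d\varphi)\Big).
\end{equation*}
The first term is bounded by $\|\nn^{(1)}b_1\|_\8\,\|\zeta-\zeta'\|_\8\lesssim\|\zeta-\zeta'\|_\8$ directly. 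For the second term, since $b_1(\zeta',\cdot)$ is Lipschitz (indeed with linear growth, but we only need the Lipschitz bound up to the linear-growth factor), it suffices to estimate the distance between the invariant measures $\mu^\zeta$ and $\mu^{\zeta'}$ in a Wasserstein-type sense.

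For that, I would use the exponential convergence established in the proof of Theorem \ref{Ergodicity}: by the invariance of $\mu^\zeta$ and $\mu^{\zeta'}$, for any $t\ge0$,
\begin{equation*}
\Big|\int_\C b_1(\zeta',\varphi)\mu^\zeta(\d\varphi)-\int_\C b_1(\zeta',\varphi)\mu^{\zeta'}(\d\varphi)\Big|=\Big|\E b_1(\zeta',Y^\zeta_t(\psi))-\E b_1(\zeta',Y^{\zeta'}_t(\psi'))\Big|
\end{equation*}
after integrating $\psi,\psi'$ against $\mu^\zeta,\mu^{\zeta'}$ respectively (one may even start both frozen equations from the same initial segment). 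The key analytic input is a coupling estimate of the form $\E\|Y^\zeta_t(\psi)-Y^{\zeta'}_t(\psi')\|_\8^2\lesssim \e^{-\ll t}\|\psi-\psi'\|_\8^2+\|\zeta-\zeta'\|_\8^2$, proved exactly as \eqref{b8} was proved, except that now the two solutions are driven by the \emph{same} Brownian motion $W_2$ but with different frozen parameters $\zeta,\zeta'$; applying It\^o's formula to $\e^{\ll t}\E|Y^\zeta(t,\psi)-Y^{\zeta'}(t,\psi')|^2$ and using ({\bf A3}) together with the boundedness of $\nn^{(1)}b_2$ and $\nn^{(1)}\si_2$ from ({\bf A2}) produces the extra additive term $c\|\zeta-\zeta'\|_\8^2$ coming from $|b_2(\zeta,x,y)-b_2(\zeta',x,y)|\lesssim\|\zeta-\zeta'\|_\8$ and the analogous bound for $\si_2$. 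Combined with the $L^2$-bound \eqref{b5} on the segment processes, this gives $\big|\int_\C b_1(\zeta',\varphi)(\mu^\zeta-\mu^{\zeta'})(\d\varphi)\big|\lesssim \e^{-\ll t/2}(1+\|\zeta\|_\8+\|\zeta'\|_\8)+\|\zeta-\zeta'\|_\8$ (using linear growth of $b_1$ for the $\e^{-\ll t/2}$ part), and then letting $t\to\8$ kills the first term and leaves the desired Lipschitz bound.

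The main obstacle is the coupling estimate with two different frozen parameters: one must re-run the dissipativity argument of Theorem \ref{Ergodicity} keeping track of the perturbation in $\zeta$, verifying that it contributes only an additive $O(\|\zeta-\zeta'\|_\8^2)$ term that does not interfere with the exponential contraction governed by $\ll_1-\ll_2>0$. Everything else — the splitting, the use of linear growth and boundedness of gradients from ({\bf A1})--({\bf A2}), and the passage to the limit $t\to\8$ — is routine given the estimates already assembled in the proof of Theorem \ref{Ergodicity}. One subtlety worth a remark: the $\e^{-\ll t/2}$ term carries a factor $1+\|\zeta\|_\8+\|\zeta'\|_\8$, so strictly speaking the argument shows Lipschitz continuity on bounded subsets of $\C$ uniformly, but since for each fixed pair $\zeta,\zeta'$ the bound $\|\zeta-\zeta'\|_\8$ survives after $t\to\8$ with a constant \emph{independent} of $\zeta,\zeta'$, global Lipschitz continuity of $\bar b_1$ follows.
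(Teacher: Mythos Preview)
Your argument is correct but follows a genuinely different route from the paper's. The paper proves the Lipschitz property by bounding the directional derivative $\nn_\phi\bar b_1(\zeta)$: writing $\bar b_1(\zeta)=\lim_{t\to\8}\E b_1(\zeta,Y^\zeta_t(\eta))$ from Theorem \ref{Ergodicity}, it differentiates under the limit and reduces matters to the uniform bound $\sup_{t\ge0}\E\|\nn_\phi Y^\zeta_t(\eta)\|_\8^2<\8$, where $\nn_\phi Y^\zeta(\cdot,\eta)$ solves the linear SDE with memory obtained by differentiating \eqref{eq4} in the frozen variable. The dissipativity needed for this derivative process is itself obtained by differentiating ({\bf A3}) (writing it with $(x'+\vv x,y'+\vv y)$ and sending $\vv\downarrow0$). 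Your finite-difference coupling of $Y^\zeta_t(\eta)$ and $Y^{\zeta'}_t(\eta)$ is more elementary in that it avoids the derivative process altogether and uses ({\bf A3}) directly, with the Lipschitz dependence of $b_2,\si_2$ on $\chi$ from ({\bf A2}) supplying the additive $\|\zeta-\zeta'\|_\8^2$ term; the only extra care required is an $\vv$-splitting of the quadratic-variation cross term so that the strict gap $\ll_1>\ll_2$ is preserved. The paper's route exploits the G\^ateaux-differentiability hypotheses already in force and yields a clean bound on $\nn\bar b_1$; your route would go through under mere Lipschitz regularity of $b_2,\si_2$ in the frozen variable.
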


\begin{proof}
For arbitrary  $\phi,\zeta\in\C$, let
\begin{equation*}
\nn_\phi\bar b_1(\zeta)=\ff{\d}{\d\vv}\bar
b_1(\zeta+\vv\phi)\Big|_{\vv=0}
\end{equation*}
be the direction derivative of $\bar b_1$ at $\zeta$ along the
direction $\phi$. By Theorem \ref{Ergodicity},   we have
\begin{equation*}
\begin{split}
\nn_\phi\bar b_1(\zeta)&=\lim_{t\to\8}\E \nn_\phi
b_1(\zeta,Y^\zeta_t(\eta))\\
&=\lim_{t\to\8}\E\Big\{(\nn_\phi^{(1)}
b_1)(\zeta,Y^\zeta_t(\eta))+\Big(\nn_{\nn_\phi
Y^\zeta_t(\eta)}^{(2)}
b_1\Big)(\zeta,Y^\zeta_t(\eta))\Big\},~~\phi,\zeta,\eta\in\C.
\end{split}
\end{equation*}
According to ({\bf A1}), to verify that $\bar b_1:\C\mapsto\R^n$ is
Lipschitz, it remains to verify
\begin{equation}\label{a5}
\sup_{t\ge0}\E\|\nn_\phi Y^\zeta_t(\eta)\|_\8^2<\8.
\end{equation}
Observe that
 $\nn_\phi Y^{\zeta}(t, \eta)$   satisfies
the   following linear SDE with memory
\begin{equation*}
\begin{split}
\d (\nn_\phi Y^{\zeta}(t, \eta))&=\Big\{
(\nn_\phi^{(1)} b_{2})(\zeta, Y^\zeta(t,\eta),Y^\zeta(t-\tau,\eta))\\
&\quad+\Big(\nn_{\nn_\phi
Y^\zeta(t,\eta)}^{(2)}b_{2}\Big)(\zeta,Y^\zeta(t,\eta),Y^\zeta(t-\tau,\eta))\\
&\quad+
\Big(\nn_{\nn_\phi Y^\zeta(t-\tau,\eta)}^{(3)}b_{2}\Big)(\zeta,Y^\zeta(t,\eta),Y^\zeta(t-\tau,\eta))\Big\} \d t\\
&\quad+\Big\{(\nn_\phi^{(1)}\si_{2})(\zeta,
Y^\zeta(t,\eta),Y^\zeta(t-\tau,\eta))\\
&\quad+\Big(\nn_{\nn_\phi Y^\zeta(t,\eta)}^{(2)}\si_{2}\Big)(\zeta,
Y^\zeta(t,\eta),Y^\zeta(t-\tau,\eta))\\
&\quad+\Big(\nn_{\nn_\phi
Y^\zeta(t-\tau,\eta)}^{(3)}\si_{2}\Big)(\zeta,
Y^\zeta(t,\eta),Y^\zeta(t-\tau,\eta))\Big\} \d W_2(t),~~~t>0
\end{split}
\end{equation*}
with the initial 
data $\nn_\phi Y^\zeta_0(\eta)=0.$ In the sequel,
let  $\chi\in\C$ and $x,x^\prime,y,y^\prime\in\R^n$. For any
$\vv>0,$ it is trivial to see from
 ({\bf A3}) that
\begin{equation*}
\begin{split}
&2\vv\<x,b_2(\chi,x^\prime+\vv x,y^\prime+\vv y)-b_2(\chi,
x^\prime,y^\prime)\>+\|\si_2(\chi,x^\prime+\vv x,y^\prime+\vv y)-\si_2(\chi,x^\prime,y^\prime)\|^2\\
&\qquad\le-\ll_1\vv^2|x|^2+\ll_2\vv^2|y|^2.
\end{split}
\end{equation*}
Multiplying $\vv^{-2}$ on both sides, followed by  
sending
$\vv\downarrow0$, gives that
\begin{equation}\label{c1}
\begin{split}
&2\<x,(\nn_x^{(2)} b_2)(\chi,x^\prime,y^\prime)+(\nn_y^{(3)}
b_2)(\chi,x^\prime,y^\prime)\>\\
&\qquad\quad +\|(\nn_x^{(2)} \si_2)(\chi,x^\prime,y^\prime)+(\nn_y^{(3)}
\si_2)(\chi,x^\prime,y^\prime)\|^2\\
& \quad \le-\ll_1|x|^2+\ll_2|y|^2.
\end{split}
\end{equation}
On the other hand, by virtue of \eqref{y1}, for any $\vv>0$, one has
\begin{equation*}
\|\si_2(\chi,x^\prime+\vv x,y^\prime+\vv
y)-\si_2(\chi,x^\prime,y^\prime)\|^2\le\aa\vv^2(|x|^2+|y|^2),
\end{equation*}
which further yields  that
\begin{equation}\label{c2}
\|(\nn_x^{(2)} \si_2)(\chi,x^\prime,y^\prime)+(\nn_y^{(3)}
\si_2)(\chi,x^\prime,y^\prime)\|^2\le\aa(|x|^2+|y|^2).
\end{equation}
Thus, with \eqref{c1} and \eqref{c2} in hand,   \eqref{a5} holds by
repeating 
the argument which  
\eqref{b5} is obtained.
\end{proof}

\section{Preliminary Results}
\label{sec:lem}
In this paper, we 
study the strong deviation between
the slow component $X^\vv(t)$ and the averaged component $\bar
X(t)$, which satisfies the following functional SDE
\begin{equation}\label{3eq3} \d \bar X(t)=\bar{b}_1(\bar X_t)\d
t+\si_1(\bar X_t)\d W_1(t),\ \ \ \bar X_0=\xi\in\C,
\end{equation}
where $\bar b_1:\C\mapsto\R^n$ is defined as in \eqref{u1}. To
achieve this goal, we need to  construct some auxiliary
two-time-scale stochastic systems with memory and provide a number
of preliminary lemmas.

\smallskip

Throughout this paper, we fix $T>0$ and set
$\dd:=\ff{\tau}{N}\in(0,1)$  for a positive integer $N$ sufficiently
large.
For any $t\in[0,T],$ consider the following   auxiliary
two-time-scale systems of functional SDEs
\begin{equation}\label{eq13}
\d \tt X^\vv(t)=b_1( X_{t_\dd}^\vv,\tt Y_t^\vv)\d t+\si_1(
X_{t_\dd}^\vv)\d W_1(t), ~~X_0^\vv=\xi\in\C,
\end{equation}
and
\begin{equation}\label{eq14}
\begin{cases}
\d \tt Y^\vv(t)=\ff{1}{\vv}b_2( X^\vv_{t_\dd},  \tt Y^\vv(t),\tt
Y^\vv(t-\tau))\d t+\ff{1}{\ss{\vv}}\si_2( X^\vv_{t_\dd},  \tt Y^\vv(t),\tt Y^\vv(t-\tau))\d W_2(t), \\
\tt Y^\vv(t_\dd)=Y^\vv(t_\dd)
\end{cases}
\end{equation}
with the initial value $\tt Y^\vv_0=Y^\vv_0=\eta\in\C$, where
$t_\dd:=\lf t/\dd\rf\dd$, the nearest breakpoint preceding $t,$ with
$\lf t/\dd\rf$ being the  integer part of $t/\dd$.

\smallskip

To proceed, we present several preliminary lemmas. The first lemma
concerns the continuity in the mean $L^p$-norm sense for the
displacement of the segment process $(X_t^\vv)_{t\in[0,T]}$.

\begin{lem}\label{L3}
{\rm Under  ({\bf A1}) and ({\bf A4}),
\begin{equation*}
\sup_{t\in[0,T]}\E\|X^\vv_t-X^\vv_{t_\dd}\|_\8^p\1_T\dd^{\ff{p-2}{2}},~~~p>2.
\end{equation*}
}
\end{lem}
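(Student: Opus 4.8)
The plan is to estimate $\E\|X^\vv_t - X^\vv_{t_\dd}\|_\8^p$ by splitting the supremum over $\theta\in[-\tau,0]$ according to whether $t+\theta$ lies before or after the breakpoint $t_\dd$, and then to control each of the two resulting increments of the path $X^\vv$ over a time window of length at most $\dd$. More precisely, for $\theta$ with $t+\theta\ge t_\dd$ we have $X^\vv_t(\theta)-X^\vv_{t_\dd}(\theta)=X^\vv(t+\theta)-X^\vv(t_\dd+\theta)$, an increment of the \emph{solution} over an interval of length $t-t_\dd\le\dd$; for $\theta$ with $t+\theta< t_\dd$ both $t+\theta$ and $t_\dd+\theta$ may fall below $0$, so there I would use assumption ({\bf A4}) (Lipschitz continuity of the initial segment $\xi$ with constant $\ll_3$), together with the fact that on $[-\tau,0]$ the shift by $t-t_\dd$ moves points by at most $\dd$, to get a bound of order $\ll_3^p\dd^p\lesssim_T \dd^{(p-2)/2}$ for $\dd\in(0,1)$, $p>2$. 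The remaining interesting term is $\sup_{0\le s\le t,\, s\ge t_\dd}\E|X^\vv(s)-X^\vv(t_\dd)|^p$ where $s-t_\dd\le\dd$.

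For that term I would write, from \eqref{eq1},
\begin{equation*}
X^\vv(s)-X^\vv(t_\dd)=\int_{t_\dd}^s b_1(X^\vv_r,Y^\vv_r)\,\d r+\int_{t_\dd}^s \si_1(X^\vv_r)\,\d W_1(r),
\end{equation*}
apply the elementary inequality $|a+b|^p\le 2^{p-1}(|a|^p+|b|^p)$, and then bound the drift part by H\"older's inequality in time, which yields a factor $(s-t_\dd)^{p-1}\le\dd^{p-1}$ times $\int_{t_\dd}^s\E|b_1(X^\vv_r,Y^\vv_r)|^p\,\d r$, and the stochastic part by the Burkholder--Davis--Gundy inequality, which gives a constant times $\E\big(\int_{t_\dd}^s\|\si_1(X^\vv_r)\|^2\,\d r\big)^{p/2}\le \dd^{(p-2)/2}\int_{t_\dd}^s\E\|\si_1(X^\vv_r)\|^p\,\d r$ after another application of H\"older. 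Since ({\bf A1}) gives $|b_1(X^\vv_r,Y^\vv_r)|\le L(1+\|X^\vv_r\|_\8)$ and, by the Lipschitz bound on $\si_1$ plus boundedness of $\si_1$ at a fixed point, $\|\si_1(X^\vv_r)\|\le c(1+\|X^\vv_r\|_\8)$, everything reduces to a uniform moment bound $\sup_{r\in[0,T]}\E\|X^\vv_r\|_\8^p<\infty$. This last bound is standard: a B--D--G plus Gronwall argument applied to \eqref{eq1} using the linear growth of $b_1$ and $\si_1$ from ({\bf A1}) (note the bound is uniform in $\vv$ because \eqref{eq1} contains no $1/\vv$ factor), with the initial moment $\|\xi\|_\8^p$ finite. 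Collecting the pieces gives, for $s-t_\dd\le\dd\le1$ and $p>2$,
\begin{equation*}
\E|X^\vv(s)-X^\vv(t_\dd)|^p\lesssim_T \dd^{p-1}+\dd^{(p-2)/2}\lesssim_T \dd^{(p-2)/2},
\end{equation*}
since $p-1\ge (p-2)/2$ for $p>2$ and $\dd<1$; combining with the ({\bf A4}) estimate for the pre-breakpoint part and taking the supremum over $t\in[0,T]$ proves the lemma.

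The only genuine obstacle is the bookkeeping in the segment splitting: one must be careful that $t_\dd$ is the breakpoint preceding $t$ (not a multiple of $\dd$ translated by $\theta$), so that for $\theta\in[-\tau,0]$ the two ``times'' $t+\theta$ and $t_\dd+\theta$ differ by exactly $t-t_\dd\in[0,\dd)$, and one must handle the case where one or both of them are negative (initial-data regime) separately via ({\bf A4}); the case where $t_\dd+\theta<0\le t+\theta$ is a hybrid that is dominated by the sum of the two previous bounds. Apart from this, the estimate is a routine combination of H\"older's inequality, the B--D--G inequality, and the uniform-in-$\vv$ $p$-th moment bound on $X^\vv_t$, and nothing in the argument uses the fast equation \eqref{eq2} beyond the crude fact that $b_1$ does not depend on $Y^\vv$ through anything worse than a bounded-coefficient term, which is already subsumed in ({\bf A1}).
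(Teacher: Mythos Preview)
Your reduction to the ``remaining interesting term'' $\sup_{t_\dd\le s\le t}\E|X^\vv(s)-X^\vv(t_\dd)|^p$ misidentifies what has to be bounded. By definition $\|X^\vv_t-X^\vv_{t_\dd}\|_\8=\sup_{\theta\in[-\tau,0]}|X^\vv(t+\theta)-X^\vv(t_\dd+\theta)|$, so the quantity of interest is
\[
\E\Big[\sup_{\theta\in[-\tau,0]}\big|X^\vv(t+\theta)-X^\vv(t_\dd+\theta)\big|^p\Big].
\]
For each fixed $\theta$ this is indeed an increment over a window of length $t-t_\dd\le\dd$, but the \emph{starting point} $t_\dd+\theta$ sweeps over $[t_\dd-\tau,t_\dd]$, an interval of length $\tau$, and the expectation sits \emph{outside} the supremum in $\theta$. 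Your split at $\theta=t_\dd-t$ does not isolate the initial-data regime: whenever $t\ge\tau+\dd$ one has $t_\dd+\theta\ge0$ for \emph{every} $\theta\in[-\tau,0]$, so ({\bf A4}) is irrelevant there and essentially the entire supremum lives in the solution regime. What is genuinely needed is a bound on
\[
\E\Big[\sup_{u\in[(t_\dd-\tau)\vee0,\,t_\dd]}\Big|\int_u^{u+h}\si_1(X^\vv_r)\,\d W_1(r)\Big|^p\Big],\qquad h=t-t_\dd\le\dd,
\]
a modulus-of-continuity estimate for the stochastic integral in which \emph{both} endpoints move with $u$. A single application of B--D--G from the fixed anchor $t_\dd$ does not deliver this.

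The paper handles precisely this difficulty by partitioning $[-\tau,0]$ into the $N=\tau/\dd$ sub-intervals $[-(m+1)\dd,-m\dd]$ and bounding each piece
\[
J_p(t,m,\dd)=\E\Big(\sup_{-(m+1)\dd\le\theta\le-m\dd}|X^\vv(t+\theta)-X^\vv(t_\dd+\theta)|^p\Big)
\]
separately. On a single piece both endpoints of the stochastic integral vary over intervals of length $\dd$, so after anchoring one endpoint B--D--G yields $J_p(t,m,\dd)\1_T\dd^{p/2}$; summing the $N=\tau/\dd$ pieces then gives the stated $\dd^{(p-2)/2}$. Your argument is essentially the estimate for one such piece (the one containing $\theta=0$), but the remaining $N-1$ pieces are not covered, and it is the summation over all of them --- not a single pointwise increment --- that produces the exponent $(p-2)/2$ rather than $p/2$.
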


\begin{proof}
In accordance with
\cite[Theorem 4.1, P.160]{M08}, we have
\begin{equation}\label{eq15}
\E\Big(\sup_{0\le t\le T}\|X^\vv_t\|^p_\8\Big)\1_T1+\|\xi\|_\8^p.
\end{equation}
 Observe that
\begin{equation*}
\begin{split}
\E\|X^\vv_t-X^\vv_{t_\dd}\|_\8^p
&\le\sum_{m=0}^{N-1}\E\Big(\sup_{-(m+1)\dd\le\theta\le-m\dd}|X^\vv(t+\theta)-X^\vv(t_\dd+\theta)|^p\Big)\\
&=:\sum_{m=0}^{N-1}J_p(t,m,\dd),
\end{split}
\end{equation*}
where $N=\tau/\dd$ by the definition of $\dd.$
To 
complete  the proof of Lemma \ref{L3}, it is sufficient to show
\begin{equation}\label{s1}
J_p(t,m,\dd)\1_T\dd^{\ff{p}{2}}.
\end{equation}
For any $t\in[0,T]$, take $k\ge0$ such that $t\in[k\dd,(k+1)\dd).$
Thus, for any $\theta\in[-(m+1)\dd,-m\dd]$, one has
\begin{equation*}
t+\theta\in[(k-m-1)\dd,(k+1-m)\dd] ~\mbox{ and
}~t_\dd+\theta\in[(k-m-1)\dd,(k-m)\dd].
\end{equation*}
In what follows, we 
consider three cases.

\smallskip

\noindent{{\bf Case 1:} $m\le k-1.$} Invoking  H\"older's inequality
and B-D-G's inequality, we obtain from ({\bf A1}) and \eqref{eq15}
that
\begin{equation}\label{e1}
\begin{split}
&J_p(t,m,\dd)\\
&\1\dd^{p-1}\int_{(k-m-1)\dd}^{t-m\dd}\E|b_1(X_s^\vv,Y_s^\vv)|^p\d
s+\E\Big(\sup_{-(m+1)\dd\le\theta\le-m\dd}\Big|\int_{k\dd+\theta}^{t+\theta}\si_1(X_s^\vv)\d
W_1(s)\Big|^p\Big)\\
&\1\dd^{p-1}\int_{(k-m-1)\dd}^{t-m\dd}\E|b_1(X_s^\vv,Y_s^\vv)|^p\d
s+\E\Big(\Big|\int_{(k-m-1)\dd}^{t-(m+1)\dd}\si_1(X_s^\vv)\d
W_1(s)\Big|^p\Big)\\
&\quad+\E\Big(\sup_{-(m+1)\dd\le\theta\le-m\dd}\Big|\int_{t-(m+1)\dd}^{t+\theta}\si_1(X_s^\vv)\d
W_1(s)\Big|^p\Big)\\
&\quad+\E\Big(\sup_{-(m+1)\dd\le\theta\le-m\dd}\Big|\int_{(k-m-1)\dd}^{k\dd+\theta}\si_1(X_s^\vv)\d
W_1(s)\Big|^p\Big)\\
&\1\dd^{p-1}\int_{(k-m-1)\dd}^{t-m\dd}\E|b_1(X_s^\vv,Y_s^\vv)|^p\d
s+\dd^{\ff{p-2}{2}}\E\Big(\int_{(k-m-1)\dd}^{t-(m+1)\dd}\|\si_1(X_s^\vv)\|^p\d
s\Big)\\
&\quad+\E\Big(\int_{t-(m+1)\dd}^{t-m\dd}\|\si_1(X_s^\vv)\|^2\d
s\Big)^{p/2}+\E\Big(\int_{(k-m-1)\dd}^{(k-m)\dd}\|\si_1(X_s^\vv)\|^2\d
s\Big)^{p/2} \\
&\1_T\dd^{\ff{p}{2}}.
\end{split}
\end{equation}

\smallskip

\noindent \noindent{{\bf Case 2:} $m\ge k+1.$} In view of ({\bf
A5}), it follows that
\begin{equation*}
|X^\vv(t+\theta)-X^\vv(t_\dd+\theta)|^p=|\xi(t+\theta)-\xi(t_\dd+\theta)|^p\1\dd^p.
\end{equation*}

\smallskip
\noindent{\bf Case 3:} $m=k$.  Also, by H\"older's inequality and
B-D-G's inequality,   we deduce from ({\bf A1}) and
 \eqref{eq15}  that
\begin{equation}\label{c3}
\begin{split}
J_p(t,m,\dd)&=\E\Big(\sup_{-(k+1)\dd\le\theta\le-k\dd}|X^\vv(t+\theta)-X^\vv(k\dd+\theta)|^p\Big)\\
&\1\dd^p+{\E}\Big(\sup_{-(k+1)\dd\le\theta\le-k\dd}(|X^\vv(t+\theta)-X^\vv(0)|^p{\bm{1}}_{\{t+\theta>0\}})\Big)\\
&\1
\dd^p+\E\Big(\sup_{-t\le\theta\le-k\dd}\Big|\int_0^{t+\theta}b_1(X_s^\vv,Y_s^\vv)\d
s\Big|^p\Big)\\
&\quad+\E\Big(\sup_{-t\le\theta\le-k\dd}\Big|\int_0^{t+\theta}\si_1(X_s^\vv)\d
W_1(s)\Big|^p\Big)\\
&\1 \dd^p+\dd^{p-1}\int_0^{t-k\dd}\E|b_1(X_s^\vv,Y_s^\vv)|^p\d
s+\dd^{\ff{p-2}{2}}\int_0^{t-k\dd}\E\|\si_1(X_s^\vv)\|^p\d
s\\
&\1_T\dd^{\ff{p}{2}},
\end{split}
\end{equation}
where $a^+:=\max\{a,0\}$ for $a\in\R.$ Consequently, the desired
assertion \eqref{s1} is   finished by taking the discussions above
into account.
\end{proof}

The lemma below provides an error bound of the difference in the
strong sense between the slow component $(X^\vv(t))$ and its
approximation $(\tt X^\vv(t))$.

\begin{lem}\label{L4}
{\rm Assume that  ({\bf A1}) and ({\bf A2}) hold and suppose further
$\vv/\dd\in(0,1)$. Then, there exists $\bb>0$ such that
\begin{equation*}
\E\Big(\sup_{0\le s\le T}|X^\vv(t)-\tt
X^\vv(t)|^p\Big)\1_T\dd^{\ff{p-2}{2}}(1+\vv^{-1}\e^{\ff{\bb\dd}{\vv}}),~~~p>2.
\end{equation*}
 }
\end{lem}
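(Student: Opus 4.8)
The plan is to estimate the difference process $Z^\vv(t):=X^\vv(t)-\tt X^\vv(t)$ by writing its integral form and splitting the drift difference into two pieces, one coming from the frozen slow argument in the drift and one from the difference of the fast components. From \eqref{eq1} and \eqref{eq13} we have
\begin{equation*}
Z^\vv(t)=\int_0^t\big(b_1(X_s^\vv,Y_s^\vv)-b_1(X_{s_\dd}^\vv,\tt Y_s^\vv)\big)\d s+\int_0^t\big(\si_1(X_s^\vv)-\si_1(X_{s_\dd}^\vv)\big)\d W_1(s),
\end{equation*}
and since $\nn b_1$ is bounded by ({\bf A1}), $|b_1(X_s^\vv,Y_s^\vv)-b_1(X_{s_\dd}^\vv,\tt Y_s^\vv)|\1 \|X_s^\vv-X_{s_\dd}^\vv\|_\8+\|Y_s^\vv-\tt Y_s^\vv\|_\8$; the diffusion difference is controlled by $L\|X_s^\vv-X_{s_\dd}^\vv\|_\8$ via ({\bf A1}). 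Applying H\"older's inequality on the drift term and the B-D-G inequality on the stochastic term, then taking $\sup_{0\le s\le t}$ and expectations, I would arrive at
\begin{equation*}
\E\Big(\sup_{0\le s\le t}|Z^\vv(s)|^p\Big)\1_T \int_0^t\E\|X_s^\vv-X_{s_\dd}^\vv\|_\8^p\d s+\int_0^t\E\|Y_s^\vv-\tt Y_s^\vv\|_\8^p\d s.
\end{equation*}
The first integral is $\1_T\dd^{(p-2)/2}$ by Lemma \ref{L3}. Then I would pass from $|Z^\vv|$ to the segment norm $\|Z^\vv_t\|_\8$ by the standard argument (as used for \eqref{b4}): a further B-D-G estimate on the segment gives $\E\|Z^\vv_t\|_\8^p\1_T \dd^{(p-2)/2}+\int_0^t\E\|Y_s^\vv-\tt Y_s^\vv\|_\8^p\d s$, and a Gronwall step in $t\in[0,T]$ is harmless since the coefficient is a fixed constant depending only on $T$.

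The substantive work is the estimate of $\E\|Y_s^\vv-\tt Y_s^\vv\|_\8^p$, the discrepancy between the genuine fast component and the auxiliary one with the piecewise-frozen slow input. Both $Y^\vv$ and $\tt Y^\vv$ solve \eqref{eq2}-type equations driven by the same $W_2$, differing only in that $\tt Y^\vv$ uses $X_{s_\dd}^\vv$ in place of $X_s^\vv$ in $b_2,\si_2$, and $\tt Y^\vv$ is re-initialized to match $Y^\vv$ at each breakpoint $t_\dd$. On each subinterval $[k\dd,(k+1)\dd)$ the two processes start from the same value. Setting $\Delta(t):=Y^\vv(t)-\tt Y^\vv(t)$, It\^o's formula applied to $\e^{\ll t/\vv}|\Delta(t)|^2$ together with the dissipativity ({\bf A3}) (which produces the factor $1/\vv$ on the good drift term) and the boundedness of $\nn b_2,\nn\si_2$ from ({\bf A2}) — giving $\|b_2(X_s^\vv,\cdot)-b_2(X_{s_\dd}^\vv,\cdot)\|\1\|X_s^\vv-X_{s_\dd}^\vv\|_\8$ and likewise for $\si_2$ — yields, after using $\e^{\ll\dd/\vv}$ to absorb the delay term over a window of length $\tau$ and handling the memory via the usual shift, an estimate of the form $\E|\Delta(t)|^2\1 \vv^{-1}\e^{\bb\dd/\vv}\sup_{s\le T}\E\|X_s^\vv-X_{s_\dd}^\vv\|_\8^2$ on each subinterval, with the matching initial condition killing the homogeneous part. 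Upgrading to the segment sup-norm by B-D-G as before and invoking Lemma \ref{L3} (with exponent $2$, or directly with $p$) then gives $\E\|Y_s^\vv-\tt Y_s^\vv\|_\8^p\1_T \dd^{(p-2)/2}\vv^{-1}\e^{\bb\dd/\vv}$.

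Feeding this back into the bound for $\E\|Z^\vv_t\|_\8^p$ produces the claimed $\1_T\dd^{(p-2)/2}(1+\vv^{-1}\e^{\bb\dd/\vv})$, where the hypothesis $\vv/\dd\in(0,1)$ is what keeps $\dd/\vv$ from being small and makes the $\e^{\bb\dd/\vv}$ factor the genuinely dominant (and necessary) term. The main obstacle I anticipate is the bookkeeping in the fast-component estimate: tracking the $1/\vv$ and $1/\ss\vv$ prefactors through It\^o's formula and the B-D-G inequality, choosing the integrating exponent $\ll/\vv$ consistently with the delay window so that the $\e^{\bb\dd/\vv}$ emerges cleanly, and verifying that the re-initialization at breakpoints really does eliminate the contribution of the initial displacement on each $[k\dd,(k+1)\dd)$ rather than letting errors accumulate across subintervals — this requires summing the per-subinterval bounds and checking the total is still $O(\dd^{(p-2)/2}\vv^{-1}\e^{\bb\dd/\vv})$ uniformly in the number $\lceil T/\dd\rceil$ of subintervals. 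Everything else is routine application of ({\bf A1})--({\bf A3}), Lemma \ref{L3}, H\"older, B-D-G, and Gronwall.
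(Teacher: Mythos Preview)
Your reduction of $\E\sup_{s\le t}|X^\vv(s)-\tt X^\vv(s)|^p$ to $\dd^{(p-2)/2}+\int_0^t\E\|Y_s^\vv-\tt Y_s^\vv\|_\8^p\,\d s$ matches the paper exactly. The divergence is in the fast-component estimate, and there is a genuine gap.

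First, you invoke ({\bf A3}), but the lemma assumes only ({\bf A1}) and ({\bf A2}); the paper's argument uses only the Lipschitz bounds from ({\bf A2}). More substantively, ({\bf A3}) is a quadratic (second-moment) condition: the integrating-factor computation on $\e^{\ll t/\vv}|\Delta(t)|^2$ yields at best $\E|\Delta(t)|^2\1\sup_s\E\|X_s^\vv-X_{s_\dd}^\vv\|_\8^2$, and Lemma~\ref{L3} at $p=2$ gives only a constant, not $\dd^{(p-2)/2}$. There is no clean passage from this $L^2$ bound to the required $L^p$ bound with the $\dd^{(p-2)/2}$ factor; your ``upgrading by B-D-G'' would need the $L^p$ estimate on $\Delta$ to begin with. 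Also, $\tt Y^\vv$ is re-initialised to $Y^\vv(k\dd)$ at each breakpoint and is therefore discontinuous there, so It\^o's formula cannot be run over $[0,T]$ in one stroke; and your phrase ``using $\e^{\ll\dd/\vv}$ to absorb the delay term over a window of length $\tau$'' conflates the mesh $\dd$ with the delay $\tau=N\dd$.

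The paper's mechanism is quite different. It proceeds by induction over intervals $[0,\tau),[\tau,2\tau),\dots$: on $[0,\tau)$ the delayed difference $\Delta(s-\tau)$ vanishes because both processes share the initial segment $\eta$. Within each such interval one sums over the $\dd$-subintervals (using $\Delta(j\dd)=0$), applies It\^o and B-D-G with the ({\bf A2}) Lipschitz bounds directly in $L^p$, and obtains an integral inequality of the form
\[
I(t,\dd)\ \1\ \ff{1}{\vv}\int_0^t\E\|X_s^\vv-X_{s_\dd}^\vv\|_\8^p\,\d s+\ff{1}{\vv}\int_0^\dd I(t,s)\,\d s,
\]
where $I(t,s)=\sum_j\E\sup_{j\dd\le r\le j\dd+s}|\Delta(r)|^p$. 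Gronwall in the \emph{second} variable (the sub-step length) then produces the factor $\e^{c\dd/\vv}$; this, not dissipative decay, is the source of the exponential. The hypothesis $\vv/\dd<1$ is used in the inductive passage to $[\tau,2\tau)$ to absorb the accumulated factor $\dd/\vv$ back into the exponential. Your proposal does not contain this Gronwall-in-$\dd$ idea, which is the heart of the argument.
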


\begin{proof}
In view of H\"older's inequality and B-D-G's inequality, it follows
from ({\bf A1}) and Lemma \ref{L3} that
\begin{equation*}
\begin{split}
\E\Big(\sup_{0\le s\le t}|X^\vv(s)-\tt X^\vv(s)|^p\Big)
&\1_T\int_0^t\E\{\|X_s^\vv- X_{s_\dd}^\vv\|_\8^p+\|Y^\vv_s-\tt Y^\vv_s\|^p_\8\}\d s\\
&\1_T \dd^{\ff{p-2}{2}}+\int_0^t\E\|Y^\vv_s-\tt Y^\vv_s\|^p_\8\d s,\
\ \ t\in(0,T].
\end{split}
\end{equation*}
 Therefore,  to finish the argument of Lemma \ref{L4}, it   suffices  to
 show that there exists $\bb>0$ such that
\begin{equation}\label{e3}
\sup_{t\in[0,T]}\E\|Y^\vv_t-\tt
Y^\vv_t\|_\8^p\1_T\vv^{-1}\dd^{\ff{p-2}{2}} \e^{\ff{\bb\dd}{\vv}}.
\end{equation}
In what follows, we verify
claim \eqref{e3} by an induction argument.
For any $t\in[0,\tau)$, due to $Y^\vv_0=\tt Y^\vv_0=\eta$,  it is
readily to check that
\begin{equation*}
\begin{split}
\E\|Y_t^\vv-\tt Y_t^\vv\|_\8^p
\le\sum_{j=0}^{\lf t/\dd\rf}\E\Big(\sup_{j\dd\le s\le
((j+1)\dd)\wedge t}|Y^\vv(s)-\tt Y^\vv(s)|^p\Big)=: I(t,\dd).
\end{split}
\end{equation*}
By means of It\^o's formula and B-D-G's inequality, together with
$\tt Y^\vv(t_\dd)=Y^\vv(t_\dd)$, we obtain from ({\bf A2})  that
\begin{equation*}
\begin{split}
\E&\Big(\sup_{j\dd\le s\le ((j+1)\dd)\wedge t}|Y^\vv(s)-\tt Y^\vv(s)|^p\Big)\\
&\quad\le\ff{c}{\vv}\int_{j\dd}^{((j+1)\dd)\wedge
t}\{\E\|X_s^\vv-X_{s_\dd}^\vv\|_\8^2+\E|Y^\vv(s)-\tt Y^\vv(s)|^p
\}\d
s\\
&\qquad \ +\ff{1}{2}\E\Big(\sup_{j\dd\le s\le((j+1)\dd)\wedge
t}|Y^\vv(s)-\tt
 Y^\vv(s)|^p\Big),~~~t\in[0,\tau].
\end{split}
\end{equation*}
Consequently, we conclude that
\begin{equation}\label{e2}
\begin{split}
I(t,\dd)
&\1\ff{1}{\vv}\int_0^t\E\|X_s^\vv-X_{s_\dd}^\vv\|_\8^2\d
s+\ff{1}{\vv}\int_0^\dd\sum_{j=0}^{\lf t/\dd\rf}\E\Big(\sup_{j\dd\le
r\le ((j\dd+s))\wedge t}|Y^\vv(r)-\tt Y^\vv(r)|^p\Big) \d s\\
&\1\ff{1}{\vv}\int_0^t\E\|X_s^\vv-X_{s_\dd}^\vv\|_\8^2\d
s+\ff{1}{\vv}\int_0^\dd I(t,s) \d s.
\end{split}
\end{equation}
This, combining Lemma \ref{L3} with   Gronwall's  inequality, gives
that
\begin{equation}\label{d7}
\E\|Y_t^\vv-\tt Y_t^\vv\|_\8^p\1
\vv^{-1}\dd^{\ff{p-2}{2}}\e^{\ff{c\dd}{\vv}},\ \ \ \ \ t\in[0,\tau)
\end{equation}
for some $c>0.$ Next, for any $t\in[\tau,2\tau)$,   thanks to
\eqref{d7}, it is immediate to note that
\begin{equation*}
\begin{split}
\E\|Y_t^\vv-\tt Y_t^\vv\|_\8^p
&\le\E\Big(\|Y^\vv_\tau-\tt
Y^\vv_\tau\|^p_\8\Big)+\E\Big(\sup_{\tau\le s\le t}|Y^\vv(s)-\tt
Y^\vv(s)|^p\Big)\\
&\le c\Big\{
\vv^{-1}\dd^{\ff{p-2}{2}}\e^{\ff{c\dd}{\vv}}+\sum_{j=0}^{\lf
t-\tau\rf}\E\Big(\sup_{(N+j)\dd\le s\le ((N+j+1)\dd)\wedge
t}|Y^\vv(s)-\tt
Y^\vv(s)|^p\Big)\Big\}\\
&=:c\{\vv^{-1}\dd^{\ff{p-2}{2}}\e^{\ff{c\dd}{\vv}}+M(t,\tau,\dd)\}.
\end{split}
\end{equation*}
Carrying out a similar argument to derive \eqref{e2}, we deduce from
\eqref{d7} that
\begin{equation*}
\begin{split}
M(t,\tau,\dd)&\1\ff{1}{\vv}\int_{\tau}^t\E\|X_s^\vv-X_{s_\dd}^\vv\|_\8^2\d s\\
&\quad+\ff{1}{\vv}\int_0^\dd\sum_{j=0}^{\lf
t-\tau\rf}\E\Big(\sup_{(N+j)\dd\le r\le ((N+j)\dd+s)\wedge
t}|Y^\vv(r)-\tt Y^\vv(r)|^p\Big)\d s\\
&\quad+\ff{1}{\vv}\int_0^{\dd}\sum_{j=0}^{\lf
t-\tau\rf}\E\Big(\sup_{j\dd\le s\le((j+1)\dd)\wedge
(t-\tau)}|Y^\vv(s)-\tt
Y^\vv(s)|^p\Big) \d s\\
&\1\ff{\dd^{\ff{p-2}{2}}}{\vv}+\ff{\dd}{\vv}\cdot\ff{\dd^{\ff{p-2}{2}}}{\vv}\e^{\ff{c\dd}{\vv}}+\ff{1}{\vv}\int_0^\dd
M(t,\tau,s)\d s.
\end{split}
\end{equation*}
Thus,  the  Gronwall inequality reads
\begin{equation*}
\begin{split}
M(t,\tau,\dd)&\1\Big\{\ff{\dd^{\ff{p-2}{2}}}{\vv}+\ff{\dd}{\vv}\cdot\ff{\dd^{\ff{p-2}{2}}}{\vv}\e^{\ff{c\dd}{\vv}}\Big\}
\e^{\ff{c\dd}{\vv}}\1\ff{\dd}{\vv}\cdot\ff{\dd^{\ff{p-2}{2}}}{\vv}\e^{\ff{c\dd}{\vv}}\1\ff{\dd^{\ff{p-2}{2}}}{\vv}\e^{\ff{c\dd}{\vv}},
\end{split}
\end{equation*}
where we have  used $\vv/\dd\in(0,1)$ in the second step. Finally,
\eqref{e3} follows by repeating the previous procedure.
\end{proof}

The following consequence explores  a uniform estimate  w.r.t. the
parameter $\vv$ for the segment process   associated with the
auxiliary fast motion.

\begin{lem}\label{l5}
{\rm Assume that  ({\bf A1}) and ({\bf A3}) hold. Then, there exists
$C_T>0$, independent of $\vv$, such that
\begin{equation}\label{w2}
\sup_{t\in[0,T]}\E\|\tt Y^\vv_t\|_\8^2\le C_T.
\end{equation}
}
\end{lem}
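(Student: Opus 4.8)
\textbf{Proof proposal for Lemma \ref{l5}.}
The plan is to obtain a uniform-in-$\vv$ $L^2$ bound for the segment process $\tt Y^\vv_t$ by the same Lyapunov/It\^o argument used in the proof of Theorem \ref{Ergodicity} to derive \eqref{b5}, but now carried out for the time-changed equation \eqref{eq14} with the frozen segment replaced by the (random, but bounded in $L^2$) slow segment $X^\vv_{t_\dd}$. The key point is that the drift $\ff1\vv b_2$ and diffusion $\ff1{\ss\vv}\si_2$ scalings conspire so that the dissipativity inequality \eqref{b1} survives unchanged after applying It\^o's formula to $\e^{(\ll'/\vv)t}|\tt Y^\vv(t)|^2$: the $\ff1\vv$ factors multiply both the good term $-\ll_1'|\tt Y^\vv|^2$ and the rate $\ll'/\vv$ in the exponential weight, leaving a $\vv$-free exponential decay after dividing through. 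Thus I would first record, exactly as in \eqref{y1}--\eqref{b1}, that ({\bf A1}) and ({\bf A3}) give constants $\ll_1'>\ll_2'>0$ and $c>0$ with
\begin{equation*}
2\<x,b_2(\chi,x,y)\>+\|\si_2(\chi,x,y)\|^2\le-\ll_1'|x|^2+\ll_2'|y|^2+c(1+\|\chi\|_\8^2),
\end{equation*}
and pick $\ll'>0$ with $\ll_1'-\ll'-\ll_2'\e^{\ll'\tau}=0$.

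Next I would run It\^o's formula on $[0,\tau)$ first. Since $\tt Y^\vv_0=\eta$ and $\tt Y^\vv(t_\dd)=Y^\vv(t_\dd)$, on the first block the frozen segment appearing in \eqref{eq14} is $X^\vv_{t_\dd}$, and one gets
\begin{equation*}
\e^{\ff{\ll' t}{\vv}}\E|\tt Y^\vv(t)|^2\le|\eta(0)|^2+\ff1\vv\int_0^t\e^{\ff{\ll' s}{\vv}}\E\big\{c(1+\|X^\vv_{s_\dd}\|_\8^2)+(\ll'-\ll_1')|\tt Y^\vv(s)|^2+\ll_2'|\tt Y^\vv(s-\tau)|^2\big\}\,\d s.
\end{equation*}
On $[0,\tau)$ the delayed term is $|\eta(s-\tau)|^2\le\|\eta\|_\8^2$, and by \eqref{eq15} (or directly by ({\bf A1}) and a standard moment bound for \eqref{eq1}) $\sup_{s\in[0,T]}\E\|X^\vv_{s_\dd}\|_\8^2\le\sup_{s\in[0,T]}\E\|X^\vv_s\|_\8^2\1_T 1+\|\xi\|_\8^2$. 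The $\ff1\vv\int_0^t\e^{\ll's/\vv}(\cdot)\,\d s$ of an $\vv$-independent constant is $\1 \e^{\ll't/\vv}(\cdot)$, so dividing by $\e^{\ll't/\vv}$ gives $\E|\tt Y^\vv(t)|^2\1_T 1+\|\xi\|_\8^2+\|\eta\|_\8^2$ uniformly in $\vv$ and $t\in[0,\tau)$. The passage from the pointwise bound to the segment-norm bound $\E\|\tt Y^\vv_t\|_\8^2$ is done exactly as in \eqref{b4}--\eqref{b5}: apply It\^o to $|\tt Y^\vv|^2$ on a window of length $\tau$, use the B-D-G inequality on the stochastic integral (the $\ff1{\ss\vv}$ in front of $\si_2$ produces a $\ff1\vv$ which is absorbed after Young's inequality into the $\ff12\E\|\tt Y^\vv_t\|_\8^2$ self-bounding term, again uniformly in $\vv$ once we integrate the already-controlled $\E|\tt Y^\vv(s)|^2$ over an interval), and rearrange.

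Finally, I would propagate the bound block by block over $[\tau,2\tau),[2\tau,3\tau),\dots$ by induction: the estimate on $[(k-1)\tau,k\tau)$ controls the delayed term $\E|\tt Y^\vv(s-\tau)|^2$ appearing on $[k\tau,(k+1)\tau)$, and since the number of blocks needed to cover $[0,T]$ is $\lceil T/\tau\rceil$, finitely many, the constants stay bounded by some $C_T$ independent of $\vv$. (One may also note that the piecewise-frozen structure of \eqref{eq14} only helps: on each interval $[\,j\dd,(j+1)\dd)$ the coefficient $X^\vv_{j\dd}$ is $\F_{j\dd}$-measurable and bounded in $L^2$ uniformly, so conditioning causes no difficulty.) The main obstacle — and the only place any care is needed — is making the $\vv$-dependence explicit at every step: one must keep the exponential weight $\e^{(\ll'/\vv)t}$ matched to the $\ff1\vv$-scaled drift so that the $\vv^{-1}$ factors cancel rather than accumulate, and check that the B-D-G term's $\vv^{-1}$ is likewise neutralized; once the bookkeeping is organized as above this is routine, and \eqref{w2} follows.
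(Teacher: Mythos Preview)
Your direct approach to $\tt Y^\vv$ has two genuine gaps. First, the passage from the pointwise bound to the segment bound via B-D-G does \emph{not} go through uniformly in $\vv$. After It\^o on $[t-\tau,t]$ and B-D-G plus Young, the martingale term leaves behind
\[
\ff12\E\|\tt Y^\vv_t\|_\8^2+\ff{c}{\vv}\int_{t-\tau}^t\E\|\si_2(X^\vv_{s_\dd},\tt Y^\vv(s),\tt Y^\vv(s-\tau))\|^2\,\d s;
\]
only the first piece is self-absorbed, and since $\|\si_2\|^2\1 1+\|X^\vv_{s_\dd}\|_\8^2+|\tt Y^\vv(s)|^2+|\tt Y^\vv(s-\tau)|^2$ is merely bounded (not $O(\vv)$), the remaining integral is of order $\tau/\vv$. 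The dissipative drift term $-\ff{\ll_1'}{\vv}\int|\tt Y^\vv|^2$ cannot cancel the contribution $\ff{c}{\vv}\int(1+\|X^\vv_{s_\dd}\|_\8^2)$ coming from the slow variable. So the sentence ``again uniformly in $\vv$ once we integrate the already-controlled $\E|\tt Y^\vv(s)|^2$'' is precisely where the bookkeeping fails. Second, even the pointwise step is circular: the auxiliary process is \emph{reset} at every grid point, $\tt Y^\vv(j\dd)=Y^\vv(j\dd)$, so running It\^o over $[0,\tau)=[0,N\dd)$ for $\tt Y^\vv$ requires controlling $\E|Y^\vv(j\dd)|^2$ at each $j\dd$, i.e.\ bounding the \emph{original} fast process first.

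The paper proceeds differently on both counts. It first bounds $Y^\vv$ (no resets) by the time change $s\mapsto\vv s$, which turns \eqref{eq2} into an SDE with \emph{unscaled} coefficients driven by $\bar W_2(s)=\vv^{-1/2}W_2(\vv s)$; then the argument of \eqref{b2}--\eqref{b5} applies verbatim in the new time and yields $\sup_{t\in[0,T]}\E\|Y^\vv_t\|_\8^2\le C_T$ with no $\vv$-dependence. The bound is then transferred to $\tt Y^\vv$ via \eqref{e3} of Lemma~\ref{L4} together with H\"older, after specializing $\dd=\vv(-\ln\vv)^{1/2}$ so that $(\vv^{-1}\dd^{(p-2)/2}\e^{\bb\dd/\vv})^{2/p}\to 0$ for $p>4$. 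In particular the constant in \eqref{w2} is obtained only under this coupling of $\dd$ to $\vv$, which your proposal never invokes.
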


\begin{proof}
From \eqref{eq2},  it follows that
\begin{equation}\label{w1}
\begin{split}
Y^\vv(t)&=\eta(0)+\int_0^{t/\vv}b_2(X^\vv_{\vv s},Y^\vv(\vv s), Y^\vv(\vv s-\tau))\d t\\
&\quad+\int_0^{t/\vv}\si_2(X^\vv_{\vv s}, Y^\vv(\vv s), Y^\vv(\vv
s-\tau))\d \bar W_2(s),~~~t>0,
\end{split}
\end{equation}
where we used the fact that $\bar W(t):=\ff{1}{\ss{\vv}}W_2(\vv t)$
is a Brownian motion. For fixed $\vv>0$ and  $t\ge0$, let $\bar
Y^\vv(t+\theta)=Y^\vv (\vv t+\theta),\theta\in[-\tau,0]. $ So, one
has $\bar Y^\vv_t=Y^\vv_{\vv t}.$ Observe that \eqref{w1} can be
rewritten as
\begin{equation*}
\begin{split}
\bar Y^\vv(t/\vv)&=\eta(0)+\int_0^{t/\vv}b_2(X^\vv_{\vv s}, \bar
Y^\vv( s), \bar Y^\vv( s-\tau ))\d s+\int_0^{t/\vv}\si_2(X^\vv_{\vv
s}, \bar Y^\vv( s), \bar Y^\vv( s-\tau))\d \bar W_2(s).
\end{split}
\end{equation*}
Then, following 
the argument
 to 
obtain
\eqref{b5}, for any $s>0$ we
can deduce that
\begin{equation*}
\E\|\bar Y^\vv_s\|_\8^2\11+\|\eta\|_\8^2\e^{-\ll
s}+\E\Big(\sup_{0\le r\le \vv s}\|X_r^\vv\|_\8^2\Big).
\end{equation*}
This, together with  $\bar Y^\vv_t=Y^\vv_{\vv t},$ gives that
\begin{equation*}
\E\| Y^\vv_{\vv s}\|_\8^2\11+\|\eta\|_\8^2\e^{-\ll
s}+\E\Big(\sup_{0\le r\le \vv s}\|X_r^\vv\|_\8^2\Big).
\end{equation*}
In particular, taking $s=t/\vv$ we arrive at
\begin{equation*}
\E\| Y^\vv_t\|_\8^2\11+\|\eta\|_\8^2+\E\Big(\sup_{0\le r\le
t}\|X_r^\vv\|_\8^2\Big).
\end{equation*}
This, together with \eqref{eq15}, yields that
\begin{equation*}
\sup_{t\in[0,T]}\E\| Y^\vv_t\|_\8^2\le C_T
\end{equation*}
for some $C_T>0.$ Observe from \eqref{e3} and H\"oder's inequality
that
\begin{equation*}
\begin{split}
\E\|\tt Y_t^\vv\|_\8^2&\le2\E\|Y_t^\vv-\tt Y_t^\vv\|_\8^2+2\E\|
Y_t^\vv\|_\8^2\\
&\1_T1+\Big(\vv^{-1}\dd^{\ff{p-2}{2}}
\e^{\ff{\bb\dd}{\vv}}\Big)^{2/p}, ~~~p>4.
\end{split}
\end{equation*}
Next, taking $\dd=\vv(-\ln\vv)^{\ff{1}{2}}$ in the estimate above
and letting $y=(-\ln\vv)^{\ff{1}{2}}$, we have
\begin{equation*}
\begin{split}
\E\|\tt Y_t^\vv\|_\8^2
&\1_T1+\Big(\e^{y^2}(\e^{-y^2}y)^{\ff{p-2}{2}} \e^{\bb
y}\Big)^{2/p}, ~~~p>4.
\end{split}
\end{equation*}
Then, the desired assertion follows since the leading term
$\e^{y^2}(\e^{-y^2}y)^{\ff{p-2}{2}} \e^{\bb y}\rightarrow0$ as
$y\uparrow\8$ whenever $p>4.$

\end{proof}

\section{ A Strong Limit Theorem for  the Slow Component }\label{sec:avg}

With several preliminary lemmas
at our hands,
we
are in position to present our main result.

\begin{thm}\label{main}
{\rm Under ({\bf A1})-({\bf A4}), one has
\begin{equation*}
\lim_{\vv\to0}\E\Big(\sup_{0\le t\le T}|X^\vv(t)-\bar
X(t)|^p\Big)=0,~~p>0.
\end{equation*}

}
\end{thm}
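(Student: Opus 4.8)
\textbf{Proof proposal for Theorem \ref{main}.}

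The plan is to interpolate between $X^\vv$ and $\bar X$ through the auxiliary processes $\tt X^\vv$, $\tt Y^\vv$ of Section \ref{sec:lem}, following Khasminskii's time-discretization scheme. First I would reduce to the case $p>2$: for $0<p\le 2$ the claim follows by Jensen's (H\"older's) inequality from the case $p'>2$, since $[0,T]$ has finite measure and $\sup$ is over a fixed interval. Fix such a $p>2$. By the triangle inequality it suffices to bound separately $\E\sup_{[0,T]}|X^\vv(t)-\tt X^\vv(t)|^p$ and $\E\sup_{[0,T]}|\tt X^\vv(t)-\bar X(t)|^p$. Lemma \ref{L4} already handles the first term: it is $\1_T \dd^{(p-2)/2}(1+\vv^{-1}\e^{\bb\dd/\vv})$, which, upon the choice $\dd=\dd(\vv)=\vv(-\ln\vv)^{1/2}$ used in Lemma \ref{l5}, tends to $0$ as $\vv\to0$ (same computation as at the end of that lemma's proof). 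So the whole matter is the second term.

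For $\tt X^\vv - \bar X$, write the difference using \eqref{eq13} and \eqref{3eq3}, and insert $\pm\bar b_1(X^\vv_{s_\dd})$ inside the drift integral:
\begin{equation*}
\tt X^\vv(t)-\bar X(t)=\int_0^t\big\{b_1(X^\vv_{s_\dd},\tt Y^\vv_s)-\bar b_1(X^\vv_{s_\dd})\big\}\d s+\int_0^t\big\{\bar b_1(X^\vv_{s_\dd})-\bar b_1(\bar X_s)\big\}\d s+\int_0^t\big\{\si_1(X^\vv_{s_\dd})-\si_1(\bar X_s)\big\}\d W_1(s).
\end{equation*}
The second and third integrals are controlled by the Lipschitz property of $\bar b_1$ (Corollary \ref{bounded}) and of $\si_1$ (assumption ({\bf A1})): after B-D-G and H\"older, they contribute $\1_T\int_0^t\E\|X^\vv_{s_\dd}-\bar X_s\|_\8^p\,\d s$, and $\|X^\vv_{s_\dd}-\bar X_s\|_\8\le\|X^\vv_{s_\dd}-X^\vv_s\|_\8+\|X^\vv_s-\tt X^\vv_s\|_\8+\|\tt X^\vv_s-\bar X_s\|_\8$, the first two pieces being $o(1)$ by Lemma \ref{L3} and Lemma \ref{L4}, the last one feeding a Gronwall loop. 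The heart of the matter is the first integral, the ``ergodic'' term $\int_0^t\{b_1(X^\vv_{s_\dd},\tt Y^\vv_s)-\bar b_1(X^\vv_{s_\dd})\}\d s$. Here I would split $[0,t]$ into the blocks $[k\dd,(k+1)\dd)$ on which $X^\vv_{s_\dd}$ is frozen at $X^\vv_{k\dd}$, and on each block compare $\tt Y^\vv$ with the frozen (fast, memory) equation \eqref{eq4} driven by $\zeta=X^\vv_{k\dd}$; a time-change $s\mapsto s/\vv$ turns the block of length $\dd$ into a block of length $\dd/\vv\to\8$, so Theorem \ref{Ergodicity} gives exponential convergence of $\E b_1(\zeta,Y^\zeta_{(\cdot)/\vv}(\cdot))$ to $\bar b_1(\zeta)$ at rate $\e^{-\ll\dd/\vv}$. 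Summing $\sim T/\dd$ blocks, each of length $\dd$, and using $\E\|\tt Y^\vv_\cdot\|_\8^2\le C_T$ (Lemma \ref{l5}) together with the linear growth of $b_1$ in ({\bf A1}) to control cross terms, one gets a bound of the shape $\1_T \dd + \dd\,\e^{-\ll\dd/\vv}+ (\text{error from replacing }\tt Y^\vv\text{ by the frozen }Y^\zeta)$; the replacement error is governed by a Gronwall-type estimate on $\E\|\tt Y^\vv_s-Y^{X^\vv_{s_\dd}}_{\cdot}\|_\8^p$ on each block, analogous to the one in Lemma \ref{L4}. With $\dd=\vv(-\ln\vv)^{1/2}$ all these terms vanish as $\vv\to0$.

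Assembling the pieces: one arrives at
\begin{equation*}
\E\Big(\sup_{0\le s\le t}|\tt X^\vv(s)-\bar X(s)|^p\Big)\1_T\ e(\vv)+\int_0^t\E\Big(\sup_{0\le r\le s}|\tt X^\vv(r)-\bar X(r)|^p\Big)\d s,
\end{equation*}
where $e(\vv)\to0$ collects all the estimates above, and Gronwall's inequality closes the loop to give $\E\sup_{[0,T]}|\tt X^\vv-\bar X|^p\1_T e(\vv)\e^{cT}\to0$. Combined with the $X^\vv-\tt X^\vv$ bound this proves the theorem. I expect the main obstacle to be the ergodic term: making rigorous, on each block, the two-step comparison $\tt Y^\vv_s \rightsquigarrow Y^{X^\vv_{s_\dd}}_{s}(\cdot) \rightsquigarrow$ its stationary average, while keeping all constants uniform in $\vv$ and in the block index, and balancing the number of blocks $T/\dd$ against the exponentially small per-block error $\e^{-\ll\dd/\vv}$ — this is exactly where the specific choice $\dd=\vv(-\ln\vv)^{1/2}$ is forced, as in Lemma \ref{l5}.
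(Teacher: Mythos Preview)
Your overall architecture matches the paper's: split $X^\vv-\bar X$ through $\tt X^\vv$, invoke Lemmas \ref{L3}, \ref{L4}, \ref{l5} and Corollary \ref{bounded}, close with Gronwall, and tune $\dd=\vv(-\ln\vv)^{1/2}$. Two small points and one genuine gap.

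First, the reduction should be to $p>4$, not $p>2$: the term $\dd^{(p-2)/2}\vv^{-1}\e^{\bb\dd/\vv}$ from Lemma \ref{L4} only vanishes under $\dd=\vv(-\ln\vv)^{1/2}$ when $p>4$ (precisely the constraint in the proof of Lemma \ref{l5} that you cite). Second, there is no ``replacement error'' between $\tt Y^\vv$ and the frozen process. By construction \eqref{eq14} freezes $X^\vv_{t_\dd}$ on each block, so after the time-change the process $\tt Y^\vv$ on $[k\dd,(k+1)\dd)$ \emph{is}, in law, $Y^{X^\vv_{k\dd}}_{(\cdot-k\dd)/\vv}(\tt Y^\vv_{k\dd})$; this is exactly \eqref{r2} in the paper, and no separate comparison is required.

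The real gap is the ergodic term
\[
\GG_p(t,\dd,\vv):=\E\sup_{s\le t}\Big|\int_0^s\{b_1(X^\vv_{r_\dd},\tt Y^\vv_r)-\bar b_1(X^\vv_{r_\dd})\}\,\d r\Big|^p .
\]
Theorem \ref{Ergodicity} only controls the \emph{mean} $\E b_1(\zeta,Y^\zeta_t)-\bar b_1(\zeta)$; the integrand $b_1(X^\vv_{r_\dd},\tt Y^\vv_r)-\bar b_1(X^\vv_{r_\dd})$ is $O(1)$ pathwise, so block-by-block summation of the type you describe yields nothing small for the $p$-th moment of the supremum, and the claimed shape ``$\1_T\dd+\dd\,\e^{-\ll\dd/\vv}$'' does not follow. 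The paper's missing step is: (i) pass from the sup to a sum over the $\lf T/\dd\rf$ blocks via H\"older, reducing to the per-block moment $\Upsilon_p(k,\dd,\vv)=\E\big|\int_{k\dd}^{(k+1)\dd}\{b_1-\bar b_1\}\,\d s\big|^p$; (ii) interpolate $\Upsilon_p\1\Upsilon_2^{p'/2}\dd^{p-p'}$ for a $p'\in(1,2)$ using the linear growth in ({\bf A1}); (iii) expand $\Upsilon_2$ as the double integral $2\int_{k\dd}^{(k+1)\dd}\int_s^{(k+1)\dd}\E\langle\cdot(s),\cdot(r)\rangle\,\d r\,\d s$, condition on $\F_s$, and use the law identity \eqref{r2} together with Theorem \ref{Ergodicity} and Lemma \ref{l5} to get $\Upsilon_2\1\vv\dd$. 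This produces $\GG_p(t,\dd,\vv)\1(\vv/\dd)^\nu$ with $\nu=p'/2\in(0,1)$, which is the quantity that actually balances against the Lemma \ref{L4} error under the chosen $\dd$. That second-moment/conditioning device is the idea your sketch lacks.
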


\begin{proof}
For  any $t\in[0,T]$ and $p>0$, set
\begin{equation*}
\Lambda(t):=\E\Big(\sup_{0\le s\le t}|X^\vv(s)-\bar X(s)|^p\Big)
~~~\mbox{ and }~~~\Gamma(t):=\E\Big(\sup_{0\le s\le t}|\tt
X^\vv(s)-\bar X(s)|^p\Big).
\end{equation*}
By  H\"older's inequality,  it is sufficient to verify that
\begin{equation}\label{s2}
\lim_{\vv\to0}\LL(T)=0,~~~p>4.
\end{equation}
In what follows, let $t\in[0,T]$ be arbitrary and assume $p>4$. For
any $t\in[0,T]$, it follows from Lemma \ref{L4} that
\begin{equation}\label{a6}
\begin{split}
\LL(t)&\1\E\Big(\sup_{0\le s\le t}|X^\vv(s)-\tt X^\vv(s)|^p\Big)+
\GG(t)\1\dd^{\ff{p-2}{2}}\Big(1+\ff{1}{\vv}\e^{\ff{\bb\dd}{\vv}}\Big)+
\GG(t).
\end{split}
\end{equation}
 Next, if we can  show that
\begin{equation}\label{s4}
\GG(t) \1
\dd^{\ff{p-2}{2}}\Big(1+\ff{1}{\vv}\e^{\ff{\bb\dd}{\vv}}\Big)+\Big(\ff{\vv}{\dd}\Big)^\nu+\int_0^t\LL(s)\d
s
\end{equation}
for some $\nu\in(0,1),$ inserting \eqref{s4} back into \eqref{a6}
and utilizing   Gronwall's inequality, we deduce that
\begin{equation*}
\LL(t)\1\dd^{\ff{p-2}{2}}\Big(1+\ff{1}{\vv}\e^{\ff{\bb\dd}{\vv}}\Big)+\Big(\ff{\vv}{\dd}\Big)^\nu.
\end{equation*}
Thus, the desired assertion \eqref{s2} follows by
choosing $\dd=\vv(-\ln\vv)^{\ff{1}{2}}$. Indeed, it is easy to see
that $\vv/\dd\in(0,1)$, which is prerequisite in Lemma \ref{L4}, for
$\vv\in(0,1)$ small enough, and that $\dd\rightarrow0$ as
$\vv\downarrow0$. Furthermore, let $y=(-\ln\vv)^{\ff{1}{2}}$ (hence
$\vv=\e^{-y^2}$), which goes into infinity as $\vv$ tends to zero.
Then, we have
\begin{equation*}
\begin{split}
\LL(t) &\1(\e^{-y^2}y)^{\ff{p-2}{2}}\Big(1+\e^{y^2+\bb
y}\Big)+y^{-\nu},
\end{split}
\end{equation*}
which goes to zero  by taking $p>4$ and letting $y\uparrow\8.$

\smallskip

Next, we intend to claim \eqref{s4}. Set
\begin{equation*}
\GG_p(t,\dd,\vv):=\E\Big(\sup_{0\le s\le t}\Big|\int_0^s\{b_1(X_{r_\dd}^\vv,\tt Y_r^\vv)-\bar{b}_1(X_{r_\dd}^\vv)\}\d
r\Big|^p\Big),~~t\in[0,T].
\end{equation*}
Applying  H\"older's inequality, B-D-G's inequality, Lipschitz
property of $\bar b_1$ due to Corollary \ref{bounded}, and Lemma
\ref{L3}, we derive that
\begin{equation*}
\begin{split}
\GG(t)&\1\E\Big(\sup_{0\le s\le
t}\Big|\int_0^t\{b_1(X_{s_\dd}^\vv,\tt Y_s^\vv)-\bar{b}_1(\bar
X_s)\}\d
s\Big|^p\Big)+\int_0^t\E\|\si_1(X_{s_\dd}^\vv)-\si_1(\bar X_s)\|^p\d s\\
 &\1\GG_p(t,\dd,\vv)+\int_0^t\E|\bar{b}_1(X_{s_\dd}^\vv)-\bar{b}_1(X_s^\vv)|^p\d
s+\int_0^t\E|\bar{b}_1(X_s^\vv)
-\bar{b}_1(\tt X_s^\vv)|^p\d s\\
&\quad+\int_0^t\E|\bar{b}_1(\tt X_s^\vv)-\bar{b}_1(\bar X_s)|^p\d s+\int_0^t\E\|\si_1( X_{s_\dd}^\vv)-\si_1(\bar X_s)\|^p\d s\\
&\1\GG_p(t,\dd,\vv)+\int_0^t\E\|X_s^\vv-\tt X_s^\vv\|_\8\d
s+\int_0^t\E\|X_{s_\dd}^\vv-X_s^\vv\|^p_\8\d
s+\int_0^t\GG(s)\d s+\int_0^t\LL(s)\d s\\
&\1\dd^{\ff{p-2}{2}}+\ff{1}{\vv}\dd^{\ff{p-2}{2}}\e^{\ff{c\dd}{\vv}}+\GG_p(t,\dd,\vv)+\int_0^t\GG(s)\d
s+\int_0^t\LL(s)\d s,
\end{split}
\end{equation*}
which, together with Gronwall's inequality, leads to
\begin{equation}\label{s7}
\GG(t) \1
\dd^{\ff{p-2}{2}}\Big(1+\ff{1}{\vv}\e^{\ff{\bb\dd}{\vv}}\Big)+\GG_p(t,\dd,\vv)+\int_0^t\LL(s)\d
s,
\end{equation}
where we have utilized the fact that $\GG_p(t,\dd,\vv)$ is
nondecreasing with respect to $t.$ By 
comparing \eqref{s4} with
\eqref{s7}, we need only 
prove
\begin{equation}\label{s8}
\GG_p(t,\dd,\vv)\1\Big(\ff{\vv}{\dd}\Big)^\nu
\end{equation}
for some $\nu\in(0,1).$

\smallskip
 Let
\begin{equation*}
\Upsilon_p(k,\dd,\vv)=\E\Big(\Big|\int_{k\dd}^{((k+1)\dd)\wedge
t}\{b_1(X_{k\dd}^\vv,\tt Y_s^\vv)-\bar{b}_1(X_{k\dd}^\vv)\}\d
s\Big|^p\Big) ~~\mbox{for any }p>0.
\end{equation*}
 In the sequel, we show that \eqref{s8} holds.
By   H\"older's inequality, we obtain that
\begin{equation}\label{s3}
\begin{split}
\GG_p(t,\dd,\vv)
 &=\E\Big(\sup_{0\le s\le t}\Big|\sum_{k=0}^{\lf
s/\dd\rf}\int_{k\dd}^{((k+1)\dd)\wedge t}\{b_1(X_{k\dd}^\vv,\tt
Y_r^\vv)-\bar{b}_1(X_{k\dd}^\vv)\}\d r\Big|^p\Big)\\
&\le\E\Big(\sup_{0\le s\le t}\Big((\lf
s/\dd\rf+1)^{p-1}\sum_{k=0}^{\lf
s/\dd\rf}\Upsilon_p(k,\dd,\vv)\Big)\Big)\\
&\le(\lf t/\dd\rf+1)^{p-1}\sum_{k=0}^{\lf
t/\dd\rf}\Upsilon_p(k,\dd,\vv)\\
&\le(\lf t/\dd\rf+1)^p\max_{0\le k\le\lf
t/\dd\rf}\Upsilon_p(k,\dd,\vv).
\end{split}
\end{equation}
For any $p^\prime\in(1,2)$, by H\"older's inequality, ({\bf A1}),
and \eqref{eq15},  observe that
\begin{equation*}
\begin{split}
\Upsilon_p(k,\dd,\vv)
&\le\Upsilon_2(k,\dd,\vv)^{\ff{p^\prime}{2}}\Big(\E\Big(\Big|\int_{k\dd}^{((k+1)\dd)\wedge
t}\{b_1(X_{k\dd}^\vv,\tt Y_s^\vv)-\bar{b}_1(X_{k\dd}^\vv)\}\d
s\Big|^{\ff{2(p-p^\prime)}{2-p^\prime}}\Big)\Big)^{\ff{2-p^\prime}{2}}\\
&\le\Upsilon_2(k,\dd,\vv)^{\ff{p^\prime}{2}}\Big(\dd^{\ff{2(p-p^\prime)}{2-p^\prime}-1}\E\Big(\Big|\int_{k\dd}^{((k+1)\dd)\wedge
t}|b_1(X_{k\dd}^\vv,\tt
Y_s^\vv)-\bar{b}_1(X_{k\dd}^\vv)|^{\ff{2(p-p^\prime)}{2-p^\prime}}\d
s\Big|\Big)\Big)^{\ff{2-p^\prime}{2}}\\
&\1
\Upsilon_2(k,\dd,\vv)^{\ff{p^\prime}{2}}\dd^{\ff{2(p-p^\prime)}{2-p^\prime}\times\ff{2-p^\prime}{2}}\\
&\1\Upsilon_2(k,\dd,\vv)^{\ff{p^\prime}{2}}\dd^{p-p^\prime},~~~~p>4.
\end{split}
\end{equation*}
Substituting this into \eqref{s3}, we arrive at
\begin{equation*}
\GG_p(t,\dd,\vv)\1\Upsilon_2(k,\dd,\vv)^{\ff{p^\prime}{2}}\dd^{-p^\prime}.
\end{equation*}
Thus, to complete the argument, it remains to show that
\begin{equation*}
\Upsilon_2(k,\dd,\vv)\1\vv\dd.
\end{equation*}
Also, by virtue of H\"older's inequality, ({\bf A1}), and
\eqref{eq15}, we derive  that
\begin{equation}\label{r1}
\begin{split}
&\Upsilon_2(k,\dd,\vv)\\
&=2\int_{k\dd}^{((k+1)\dd)\wedge t}\int_s^{((k+1)\dd)\wedge
t}\E\<b_1(X_{k\dd}^\vv,\tt
Y_s^\vv)-\bar{b}_1(X_{k\dd}^\vv),b_1(X_{k\dd}^\vv,\tt
Y_r^\vv)-\bar{b}_1(X_{k\dd}^\vv)\>\d r\d s\\
&\1\int_{k\dd}^{(k+1)\dd}\int_s^{(k+1)\dd}(\E|\E((b_1(X_{k\dd}^\vv,\tt
Y_r^\vv)-\bar{b}_1(X_{k\dd}^\vv))|\F_s)|^2)^{1/2}\d r\d s.
\end{split}
\end{equation}
For any $r\in[k\dd,(k+1)\dd)$, by the definition of $\tt Y^\vv$,
defined as in \eqref{eq14}, it follows that
\begin{equation}\label{q1}
\begin{split}
\tt Y^\vv(r)&=\tt Y^\vv(k\dd)+\ff{1}{\vv}\int_{k\dd}^rb_2( X^\vv_{k
\dd}, \tt
Y^\vv(u), \tt Y^\vv(u-\tau))\d u\\
&\quad+\ff{1}{\ss{\vv}}\int_{k\dd}^r\si_2(X^\vv_{k \dd},\tt
Y^\vv(u), \tt Y^\vv(u-\tau))\d
W_2(u)\\
&=\tt Y^\vv(k\dd)+\int_0^{\ff{r-k\dd}{\vv}}b_2(X^\vv_{k \dd},\tt
Y^\vv(k\dd+\vv u), \tt Y^\vv(k\dd+\vv u-\tau))\d u\\
&\quad+\int_0^{\ff{r-k\dd}{\vv}}\si_2(X^\vv_{k \dd}, \tt
Y^\vv(k\dd+\vv u-\tau))\d \tt W_2(u),
\end{split}
\end{equation}
where $\tt W_2(u):=(W_2(\vv u+k\dd)-W(k\dd))/\ss{\vv}$, which is
also a Wiener process. For fixed $\vv>0$ and $u\ge0$, let
\begin{equation*}
\bar Y^{X^\vv_{k\dd}}(u+\theta)=\tt Y^\vv(k\dd+\vv u+\theta), \ \ \
\theta\in[-\tau,0].
\end{equation*}
Then \eqref{q1} can be rewritten as
\begin{equation*}
\begin{split}
\bar Y^{X^\vv_{k\dd}}\Big(\ff{r-k\dd}{\vv}\Big) &=\tt
Y^\vv(k\dd)+\int_0^{\ff{r-k\dd}{\vv}}b_2\left(X^\vv_{k\dd},  \bar Y^{X^\vv_{k\dd}}(u), \bar Y^{X^\vv_{k\dd}}( u-\tau)\right)\d u\\
&\quad+\int_0^{\ff{r-k\dd}{\vv}}\si_2\left(X^\vv_{k\dd},\bar
Y^{X^\vv_{k\dd}}(u), \bar Y^{X^\vv_{k\dd}}( u-\tau)\right)\d \tt
W_2(u).
\end{split}
\end{equation*}
Consequently, by the weak uniqueness of solution, we arrive at
\begin{equation}\label{r2}
\mathscr{L}(\tt
Y^\vv_r)=\mathscr{L}\Big(Y^{X^\vv_{k\dd}}_{(r-k\dd)/\vv}(\tt
Y^\vv_{k\dd})\Big),
\end{equation}
where $\mathscr{L}(\zeta)$ denotes the law of random variable
$\zeta$.  Finally, we obtain from \eqref{d6}, \eqref{r1},
\eqref{r2},  and Lemma \ref{l5} that
\begin{equation*}
\begin{split}
\Upsilon_2(k,\dd,\vv)&\1
 (1+\E\|X_{k\dd}^\vv\|_\8^2 +\E\|\tt Y^\vv_{k\dd}\|_\8^2)\int_{k\dd}^{(k+1)\dd}\int_s^{(k+1)\dd}\exp\Big(-\ff{c(r-k\dd)}{\vv}\Big)\d
r\d s\\
&\1\vv\dd.
\end{split}
\end{equation*}
The 
proof is therefore complete.

\end{proof}

\begin{rem}
{\rm In this paper, we only focus on the case, where the diffusion
coefficient of the slow component is independent of the fast motion.
For the case that the slow component fully depends on the fast one,
there is an illustrative counterexample \cite[p.1011]{L10} in which
the weak convergence holds but there is no strong convergence.
}
\end{rem}

\begin{rem}
{\rm In the present paper, we explore a strong limit theorem for the
averaging principle for a class of two-time-scale SDEs with memory
under certain dissipative conditions. Nevertheless, our main result
can be generalized to some cases, where the fast motion does not
satisfy a dissipative condition. Indeed, by a close inspection of
the argument of Theorem \ref{main}, to cope with the non-dissipative
case, one of the crucial procedures is to discuss the ergodic
property of the frozen equation without dissipativity. However, for
some special cases, this problem has been addressed in Bao et al.
\cite{BYY13}.

}
\end{rem}

\begin{rem}
{\rm As we mentioned in the Introduction section, the study on
two-time-scale stochastic systems with memory  is still in its
infancy. So, there is numerous work to be done in the future. Here,
we list some of them. For the fast component, in this work we
concentrate on the case of point delay. So far, it seems hard to
extend our main result to the general case, e.g., the distributed
delay, where the main difficulty is to provide an error bound of the
difference in the strong sense between the fast component
$(Y^\vv(t))$ and its approximation $(\tt Y^\vv(t))$. Moreover, it is
also very challengeable to reveal the rate of strong convergence
established in Theorem \ref{main}  since the phase space of the
segment processes is infinite-dimensional. The questions above will
be addressed in our forthcoming work.

}
\end{rem}

\end{document}